\documentclass[a4paper, 11pt]{amsart}
\usepackage[ngerman, english]{babel}
 \usepackage[foot]{amsaddr}
\usepackage{amsfonts,amsmath}
\usepackage{amsthm}
\usepackage{txfonts}
\usepackage[arrow, matrix, curve]{xy}
\usepackage{tikz}

\newenvironment{example}[1][Example]{\begin{trivlist}
\item[\hskip \labelsep {\bfseries #1}]}{\end{trivlist}}
\newenvironment{remark}[1][Remark]{\begin{trivlist}
\item[\hskip \labelsep {\bfseries #1}]}{\end{trivlist}}

\newenvironment{notations}[1][Notations]{\begin{trivlist}
\item[\hskip \labelsep {\bfseries #1}]}{\end{trivlist}}

\newtheorem{theorem}{Theorem}[section]
\newtheorem{lemma}[theorem]{Lemma}
\newtheorem{proposition}[theorem]{Proposition}
\newtheorem{corollary}[theorem]{Corollary}
\newtheorem{definition}[theorem]{Definition}

\newcommand{\defi}{\coloneqq}

\newcommand{\R}{\mathbb{R}}
\newcommand{\Z}{\mathbb{Z}}
\newcommand{\N}{\mathbb{N}}
\newcommand{\Sph}{\mathbb{S}^1}

\newcommand{\Rzwei}{\mathbb{R}^2}
\newcommand{\fsig}{f}

\newcommand{\pt}{\partial_t}  
\newcommand{\ps}{\partial_s}    
\newcommand{\pp}{\partial_p}   
\newcommand{\ab}{[a,b]}
\newcommand{\nut}{[0,T)}

\newcommand{\ind}{ \mbox{ind}}

\newcommand{\de}{d}

\title[Singularities for a constraint curvature flow]{Singularities of the area preserving curve shortening flow with a free boundary condition}

\author[E. M\"ader-Baumdicker]{Elena M\"ader-Baumdicker}
\address{Karlsruhe Institute of Technology, Institute for Analysis, Englerstr. 2, 76131 Kalrsruhe, Germany}
\email{elena.maeder-baumdicker@kit.edu}
%phone{$+49 721 608 48728$}

\begin{document}

\begin{abstract}
We consider the area preserving curve shortening flow with Neumann free boundary conditions outside of a convex domain or at a straight line. We give a criterion on initial curves that guarantees the appearance of a singularity in finite time. We prove that the singularity is of type II. Furthermore, if these initial curves are convex, then an appropriate rescaling at the finite maximal time of existence yields a grim reaper or half a grim reaper as limit flow. We construct examples of initial curves satisfying the mentioned criterion.
\end{abstract}

\maketitle

 \section{Introduction}
 The area preserving curve shortening flow (APCSF) for closed plane curves was introduced by M.\ Gage in 1986 \cite{Gage}. It is the ``steepest descent flow'' for the length functional under the constraint that the enclosed area is constant. For a family of simple closed curves $\gamma:\Sph\times[0,T)\to\Rzwei$, the evolution  equation turns out to be
 \begin{align*}
  \frac{d}{dt}\gamma=\left(\kappa-\frac{\int\kappa ds}{L}\right)\nu =\left(\kappa-\frac{2\pi}{L}\right)\nu,
 \end{align*}
where we use the following notation: $\nu=J\tau$ is the normal of the curves, where $J$ is the rotation by $+\frac{\pi}{2}$; $\kappa$ is the curvature with respect to $\nu$, $L$ is the length of the curves and $ds$ denotes integration by arclength.\\
M.\ Gage proved in \cite{Gage} that a strictly convex simple closed curve remains strictly convex under the APCSF. The curves converge for $t\to\infty$ smoothly to a circle enclosing the same enclosed area as $\gamma_0$. Thus, the flow converges to the solution of the isoperimetric problem in $\Rzwei$. This problem consists in finding the shortest closed curve enclosing a fixed area.
The analog result for $n$-surfaces in $\mathbb R^{n+1}$, $n\geq 2$ was proved by G.\ Huisken in \cite{Huisken87}: A uniformly convex, embedded surface moving according to the volume preserving mean curvature flow stays uniformly convex and exists for all times $t\in[0,\infty)$. The moving surfaces converge smoothly to a sphere enclosing the same volume as the initial surface.

We consider the APCSF in a free boundary setting and want to know when and how singularities develop. But at first we recall what is known about the existence of singularities in the closed situation. 

J.\ Escher and K.\ Ito considered in \cite{EscherIto} immersed closed curves possibly with self-intersections. Then the evolution equation is $ \frac{d}{dt}\gamma=\left(\kappa-\frac{2\pi m}{L}\right)\nu$ where $m\in\mathbb Z$ is the index (or turning number) of the immersed closed curves. The index $m$ is independent of time, and by possibly changing the orientation it is non-negative. Escher and Ito proved that an immersed curve with $m\geq 1$ and enclosed area $A_0<0$ or $m\geq 2$ and $L_0^2<4\pi m A_0$ develops a singularity in finite time. The proof is inspired by the work of K.-S.\ Chou on the surface diffusion flow for curves \cite{Chou}.\\

X.-L.\ Wang and L.-H.\ Kong also studied immersed closed curves moving according to the APCSF \cite{WangKong}. They proved that the flow exists for all times and converges smoothly to an $m$-fold circle when the initial curve is convex and has so-called ``$n$-fold rotational symmetry'' and index $m$ ($n>2m$). On the other hand, ``Abresch-Langer type'' curves either converge to a multiple cover of a circle (when $A_0>0$) or the curvature blows up at finite time (when $A_0<0$) or the curvature blows up at the maximal time of existence (when $A_0=0$), see \cite[Theorem~1.2]{WangKong}. 
Note that there are examples where only a slight change is necessary to deform an initial curve with $A_0<0$ into one with $A_0=0$ and then into one with $A_0>0$.
\\[-0.2cm]

We now explain the free boundary setting of the APCSF which was studied by the author in \cite{MeinPaper, MeineDiss}. Let $\Sigma\subset\Rzwei$ be a convex simple closed curve in the plane and orient it positively. We call $\Sigma$ a \emph{support curve}. It is not moving in time. An \emph{initial curve} $\gamma_0:[a,b]\to\Rzwei$ is a curve with endpoints $\gamma_0(a),\gamma_0(b)\in\Sigma$ where we prescribe the angle to be $90$ degrees. We consider the ``outer situation'' which means that the curve $\gamma_0$ goes into the ``exterior domain'' with respect to $\Sigma$ and also comes back to $\Sigma$ ``from the outside'' at the endpoints. In formulas, this means
\begin{align}\label{perp}
 \tau_0(a) = - \nu_\Sigma (\gamma_0(a)), \ \ \ \ \tau_0(b) = \nu_\Sigma (\gamma_0(b)),
\end{align}
where $\tau_0:[a,b]\to\R^2$ is the tangent of $\gamma_0$ and $\nu_\Sigma:\Sigma \subset \R^2\to\R^2$ is the inner unit normal to $\Sigma$\footnote{As $\Sigma$ is a simple closed curve, we define the unit normal (and the tangent) to be defined on the image of the curve in $\R^2$. Since $\gamma_0$ can have self-intersections, we use the parametrized version of the tangent.}.

We now let the curve $\gamma_0$ flow according to the APCSF such that these conditions are preserved, i.e.\ $\gamma:[a,b]\times\nut\to\Rzwei$ satisfies $\gamma(a,t),\gamma(b,t)\in\Sigma$ and (\ref{perp}) for each time $t\in\nut$ and $$\frac{d}{dt}\gamma=\left(\kappa-\frac{\int\kappa ds}{L}\right)\nu.$$ 
As the curves are not closed the quantity $\int\kappa ds$ is not an integer times $2\pi$ in general. It is in fact the first step to find conditions that guarantee a bound of $\bar\kappa:= \frac{\int\kappa ds}{L}$ independent of $t$. In \cite{MeinPaper}, the author proved that the flow in this setting does not develop a singularity when the initial curve satisfies four conditions: 
\begin{enumerate}
 \item  $\gamma_0$ is strictly convex,
 \item it is embedded,
 \item it is contained in the exterior domain with respect to $\Sigma$ and 
 \item  it satisfies $L_0< \frac{4}{5\max|\kappa_\Sigma|}\arcsin\left(\frac{A_0}{L_0^2}\right)$, 
\end{enumerate}
where $A_0$ is the enclosed area of the domain enclosed by $\gamma_0$ and the part of $\Sigma$ connecting $\gamma_0(b)$ and $\gamma_0(a)$. Furthermore, the curves $\gamma(\cdot,t)$ subconverge under these conditions smoothly for $t\to\infty$ to an arc of a circle sitting outside of $\Sigma$ and meeting $\Sigma$ perpendicularly. \\
 
In this paper we answer the following questions that naturally arise when studying this setting:
\begin{itemize}
 \item Are there curves that develop a singularity under the APCSF in the free boundary setting?
 \item Are there convex initial curves developing a singularity?
 \item Does the singularity appear in finite time?
 \item Of what type are the singularities? 
 \item What does a blowup at the singular time look like?
\end{itemize}
For our main theorem we explain some preliminaries. As $\Sigma$ is a smooth convex closed curve, every $x\in\Sigma$ has an ``antipodal point'' $x'\in\Sigma$ which is a point in $\Sigma$ with $\tau_\Sigma(x)=-\tau_\Sigma(x')$, where $\tau_\Sigma:\Sigma\subset \R^2\to\R^2$ is the tangent of $\Sigma$. Note that this point is not unique as the curve is not strictly convex. The \emph{minimum width} of $\Sigma$ is 
\begin{align*}
 d_\Sigma \defi \min\{|x-x'|:x, x'\in\Sigma, x' \text{ antipodal to } x\}.
\end{align*}
This is the least distance of two parallel lines touching $\Sigma$.\\[-0.2cm]

We consider $\gamma_0:\ab\to\R^2$, an initial curve with $L_0< d_\Sigma$, where $L_0$ is the length of $\gamma_0$. By definition of $d_\Sigma$ the points $\gamma_0(a)$ and $\gamma_0(b)$ can not be antipodal to each other. We let the curve $\gamma_0$ flow by the APCSF with Neumann free boundary conditions as described above. As this flow is the ``steepest descent flow'' of the length functional (under a constraint), the length does not increase under the flow. As a consequence we get that all endpoints of the evolving curves $\gamma(a,t),\gamma(b,t)$ are not antipodal to each other. Note that for each time $t\in[0,T)$ the curve $\Sigma\setminus\{\gamma(a,t),\gamma(b,t)\}$ is divided into two pieces. At one piece the angle of the normal $\nu_\Sigma$ turns more than $\pi$. The angle of the unit normal of the other part, we call it the \emph{short piece}, turns less than $\pi$. \\[-0.2cm]

For each $t\in[0,T)$ we append the ``short piece'' of $\Sigma$ to $\gamma(\cdot,t)$ in order to close the curve $\gamma(\cdot,t)$: Define a family $\sigma(t):[\alpha(t),\beta(t)]\to\Sigma$ by connecting $\gamma(b,t)$ and $\gamma(a,t)$ by following $\Sigma$ along the ``short piece''. Note that $\sigma(t)$ is just a point if $\gamma(a,t)=\gamma(b,t)$. 
We use the notation $\sigma(0)=:\sigma_0$. Since the endpoints of our curves are never antipodal and as the endpoints of $\gamma(\cdot,t)$ vary continuously in $t$, the family $\sigma$ is continuous in $t$. We will see that it is actually $C^1$ in $t$.
We denote the assembled closed curve by $\gamma(\cdot,t) + \sigma(t)$.  The boundary conditions imply that $\int_{\gamma(\cdot,t)}\kappa d s\not \in 2\pi \Z$ for all $t\in[0,T)$, in particular $\int_{\gamma_0}\kappa d s\neq 0$. 
The (oriented) enclosed area $A(\gamma(\cdot,t) + \sigma(t))$ is preserved under the APCSF, and we can state our main theorem:

\begin{theorem}\label{thm12}
 Let $\gamma_0:\ab\to\Rzwei$ be an initial curve satisfying $L_0<d_\Sigma$. Choose the orientation of $\gamma_0$ such that $\int_{\gamma_0}\kappa d s>0$. Fix $l\in\N$ such that $(2l-2)\pi<\int_{\gamma_0}\kappa ds<2l\pi$. We further assume
 \begin{enumerate}
  \item either $A(\gamma_0 +\sigma_0)>0$ and $\frac{L_0^2}{A(\gamma_0 +\sigma_0)} \leq \pi\frac{(2l-1)^2}{l}$,
  \item or 
    $A(\gamma_0 +\sigma_0)<0$,
   \end{enumerate}
   where $\gamma_0 + \sigma_0$ is the extension of $\gamma_0$ along the ``short piece'' described above.\\
 In these cases  the solution of the area preserving curve shortening flow with Neumann free boundary conditions outside of $\Sigma$ develops a singularity in finite time, i.e.\ $T_{\text{max}}<\infty$.\\
 Furthermore, the finite time singularity is of type II in the sense that 
 \begin{align*}
 & \max_{p\in\ab}|\kappa|(p,t)\to\infty \ (t\to T_{\text{max}}) \text{  and }\\
 & \max_{p\in\ab}\left(|\kappa|^2(p,t)(T_{\text{max}}-t)\right) \text{ is unbounded}.
 \end{align*}
\end{theorem}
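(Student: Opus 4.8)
The plan is to prove both assertions by contradiction, exploiting that the flow is the gradient flow of length under the area constraint. Throughout I would use the two basic facts that hold as long as the flow stays smooth: the enclosed area $A(\gamma(\cdot,t)+\sigma(t))$ equals the fixed value $A(\gamma_0+\sigma_0)$, while, using the Neumann condition to discard boundary terms, $\frac{d}{dt}L=-\int(\kappa-\bar\kappa)^2\,ds\le 0$, so that $L(t)\le L_0$ and $\int_0^{T_{\text{max}}}\!\int(\kappa-\bar\kappa)^2\,ds\,dt\le L_0<\infty$. A preliminary step I would establish is that the index $l$ is preserved: the assembled closed curve $\gamma(\cdot,t)+\sigma(t)$ depends continuously on $t$ and its rotation number is an integer, hence constant; together with the two right-angle corners forced by \eqref{perp} and the fact that the short piece turns by less than $\pi$, this confines $\int_{\gamma(\cdot,t)}\kappa\,ds$ to a fixed interval around $(2l-1)\pi$ and, in particular, keeps $\bar\kappa$ bounded.

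For the finite-time statement I would assume $T_{\text{max}}=\infty$ and derive a contradiction, following the scheme of Escher--Ito \cite{EscherIto} and Chou \cite{Chou} adapted to the free-boundary situation. Since $A(\gamma_0+\sigma_0)\neq 0$ the curve cannot collapse, so $L(t)$ stays bounded below by a positive constant; combined with the time-integral bound on $\kappa-\bar\kappa$ and the a priori estimates from \cite{MeinPaper, MeineDiss}, global existence forces the flow to subconverge to a stationary configuration, i.e.\ to a circular arc meeting $\Sigma$ perpendicularly whose index is still $l$. Such a model arc encloses \emph{positive} area, which already contradicts case (ii), where the preserved area $A(\gamma_0+\sigma_0)<0$. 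In case (i) one computes that its isoperimetric ratio $L_\infty^2/A(\gamma_0+\sigma_0)$ is strictly larger than $\pi(2l-1)^2/l$ (for a straight support line it equals $2(2l-1)\pi$, and $2(2l-1)>\tfrac{(2l-1)^2}{l}$ for every $l\ge 1$); since $L_\infty\le L_0$ and the hypothesis bounds $L_0^2/A(\gamma_0+\sigma_0)\le \pi(2l-1)^2/l$, this is again a contradiction. Hence $T_{\text{max}}<\infty$.

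For the type of the singularity I would argue, again by contradiction, that it cannot be of type I, so that $\max|\kappa|^2(T_{\text{max}}-t)$ must be unbounded. Assuming a type I bound $\max|\kappa|^2(T_{\text{max}}-t)\le C$, I would note that near $T_{\text{max}}$ the curvature blows up while $\bar\kappa$ stays bounded (because $\int\kappa\,ds$ is controlled by the fixed index and $L$ is bounded below), so the non-local term $-\bar\kappa\,\nu$ is of lower order and the flow is an asymptotically vanishing perturbation of curve shortening flow. A type I parabolic rescaling together with Huisken's monotonicity formula then yields a homothetically shrinking blow-up limit, which must be one of the self-shrinkers of the curve shortening flow (a line, a circle, or an Abresch--Langer curve). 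Each of these I would exclude using the preserved non-zero area, the fixed index $l$, and the free-boundary structure at $\Sigma$: a flat limit does not produce curvature blow-up, while a self-similarly collapsing closed loop is incompatible with the global area/index bookkeeping that is responsible for the singularity in the first place. This forces the singularity to be of type II.

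The main obstacle is twofold. In the finite-time part it is the justification, under the mere assumption of global existence, of subconvergence to the stationary perpendicular arc together with the identification of its index as $l$; this requires uniform curvature estimates and a classification of the stationary solutions of the free-boundary problem. In the type-II part it is carrying out the blow-up analysis for a \emph{non-local} flow with a free boundary and excluding every self-shrinker model, which is precisely the point where the grim-reaper behaviour announced in the abstract enters and where the delicate work lies.
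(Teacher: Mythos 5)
Your finite-time argument runs along essentially the same lines as the paper: confinement of $\int\kappa\,ds$ to $((2l-2)\pi,2l\pi)$ for all time, a lower length bound from $A(\gamma_0+\sigma_0)\neq 0$, subconvergence (assuming $T_{\max}=\infty$) to a perpendicular, possibly multicovered circular arc, and a contradiction with area preservation in case (ii) and with the isoperimetric hypothesis in case (i). The one soft spot there is that you verify $L_\infty^2/A(\gamma_0+\sigma_0)>\pi(2l-1)^2/l$ only for the straight-line model; here $\Sigma$ is a convex closed curve, and the inequality has to be extracted from direct bounds on the multicovered arc, namely $L(\gamma_\infty)\geq(2l-1)\pi r_\infty$ (which uses Proposition~\ref{wi} on the last open piece of the arc) and $A(\gamma_\infty+\sigma_\infty)<l\pi r_\infty^2$. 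Your sketch would need to supply these.

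The genuine gap is in the type-II part, in the exclusion of the line as a blow-up limit. ``A flat limit does not produce curvature blow-up'' is not an argument here: the limit is obtained from rescalings normalized so that $\tilde\kappa_j=1$ at space-time points $(p_j,\tau_j)$ with $\tau_j=-Q_j^2(T-t_j)$, and the type~I hypothesis only gives $\tau_j\geq-c$. Nothing you wrote prevents $\tau_j\to 0^-$, in which case the normalization points escape toward the final time of the ancient limit flow and the limit can perfectly well be a static line even though the original curvature blows up. The paper closes exactly this hole with Lemma~\ref{blowuprate}, the lower blow-up rate $\kappa_{\max}^2(t)\geq\tfrac{1}{4(T-t)}$, which forces $\tau_j\leq-\tfrac14$ and hence yields a point of curvature $1$ at a time in the compact interval $[-c,-\tfrac14]$ of the limit. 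Establishing that rate is itself nontrivial in the free-boundary setting: one must show $\partial_s^2\kappa^2\leq 0$ when the maximum of $\kappa^2$ sits at an endpoint, which uses the boundary identity $\partial_s\kappa(a,t)=(\kappa(a,t)-\bar\kappa(t))\,\kappa_\Sigma(\gamma(a,t))$ together with the sign of $\kappa(a,t)-\bar\kappa(t)$ there. Separately, your exclusion of the closed self-shrinkers by ``global area/index bookkeeping'' is not a proof either; the clean argument is that the rescaled curves have length $Q_jL(\gamma_{t_j})\geq Q_j\delta\to\infty$, so the proper limit has infinite length and cannot be compact (and the properness itself comes out of the monotonicity formula, not for free).
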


If $\gamma_0$ is convex, we can say what the limit flow looks like after a suitable rescaling procedure.
\begin{corollary} \label{corconvex}
Let $\gamma_0:[a,b]\to\R^2$ be an initial curve satisfying the conditions from Theorem~\ref{thm12}. Assume further that $\gamma_0$ is convex, $\kappa_0\geq0$.
Then the ``Hamilton blow-up'' at $T_{max}<\infty$ yields either a grim reaper without boundary or half a grim reaper at a straight line. 
\end{corollary}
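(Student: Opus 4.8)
The plan is to carry out Hamilton's blow-up procedure at the type II singularity and then to identify the resulting eternal limit flow by means of the differential Harnack inequality for the curve shortening flow. Since Theorem~\ref{thm12} provides $T_{\text{max}}<\infty$ together with the type II property, I would first choose, for each $k\in\N$, a pair $(p_k,t_k)$ with $t_k\le T_{\text{max}}-\frac1k$ realizing
$$ \max_{p,\,t\le T_{\text{max}}-\frac1k}\abs{\kappa}^2(p,t)\Bigl(T_{\text{max}}-\tfrac1k-t\Bigr), $$
set $L_k\defi\abs{\kappa}(p_k,t_k)\to\infty$, and define the parabolically rescaled flows $\gamma_k(\cdot,\tau)\defi L_k\bigl(\gamma(\cdot,t_k+\tau L_k^{-2})-\gamma(p_k,t_k)\bigr)$. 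The type II choice forces the rescaled curvature to satisfy $\abs{\kappa_k}\le 1+o(1)$ on expanding time intervals, with equality at the marked point, and the time intervals exhaust all of $(-\infty,\infty)$, so that any limit will be an \emph{eternal} solution.

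The next step is to check that both the nonlocal term and the support curve disappear in the limit. Since $\bar\kappa=\frac{\int\kappa\,ds}{L}$ is bounded along the flow (as established for this setting), the rescaled average $\bar\kappa_k=\bar\kappa/L_k\to 0$, so the rescaled flows solve $\pt\gamma_k=(\kappa_k-\bar\kappa_k)\nu_k$ with $\bar\kappa_k\to0$ and hence converge to a pure curve shortening flow. Likewise, the support curve $\Sigma$ is a fixed smooth convex curve, so $\abs{\kappa_\Sigma}$ is bounded and the rescaled support curves have curvature $\kappa_\Sigma/L_k\to0$; thus any rescaled limit of $\Sigma$ is a straight line. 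To pass to a limit I would invoke the uniform interior and boundary estimates on $\abs{\partial_s^m\kappa}$ available for this flow, which together with the normalization $\abs{\kappa_k}\le 1+o(1)$ yield $C^\infty_{\text{loc}}$ bounds; a standard compactness argument then extracts a subsequence converging smoothly to an eternal limit flow $\gamma_\infty$ with $\abs{\kappa_\infty}\le 1$ and $\abs{\kappa_\infty}=1$ attained at an interior space-time point.

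The dichotomy between the two possible limits comes from the behaviour of the free boundary. Let $d_k$ denote the rescaled distance from the marked point to the rescaled endpoints on $\Sigma$. If $d_k\to\infty$ along the chosen subsequence, the boundary escapes to infinity and $\gamma_\infty$ is a complete eternal curve shortening flow without boundary. If instead $d_k$ stays bounded, then the rescaled support curves converge to a straight line $\ell$, and the Neumann condition $\tau\perp\ell$ passes to the limit, so that $\gamma_\infty$ is an eternal curve shortening flow in a half-plane meeting $\ell$ orthogonally, i.e.\ half a grim reaper at a straight line. Keeping these two cases apart, and in particular ensuring that in the bounded case the perpendicularity condition genuinely survives the limit, is the main obstacle: the free boundary sits on a curved support curve, so one must control the rescaled endpoints and the flattening of $\Sigma$ simultaneously with the interior convergence.

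Finally I would identify $\gamma_\infty$. Convexity is preserved along the flow: the evolution $\pt\kappa=\partial_s^2\kappa+\kappa^2(\kappa-\bar\kappa)$ has vanishing zeroth-order term on $\{\kappa=0\}$, so the maximum principle keeps $\kappa\ge0$ from $\kappa_0\ge0$ (with the boundary condition respected), whence $\kappa_\infty\ge0$. Thus $\gamma_\infty$ is a weakly convex eternal solution of the curve shortening flow whose curvature attains an interior space-time maximum. By Hamilton's differential Harnack inequality for the curve shortening flow, such a solution must be a translating soliton, that is a grim reaper; in the bounded-boundary case, reflecting across $\ell$ produces a full grim reaper and identifies $\gamma_\infty$ as half of one. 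This yields the stated alternative.
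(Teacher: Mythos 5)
Your proposal follows essentially the same route as the paper: the Hamilton blow-up at the type II singularity, the vanishing of the rescaled nonlocal term $\bar\kappa/L_k$ and the flattening of $\Sigma$ to a line, smooth subconvergence to an eternal limit flow with $0<\kappa_\infty\le 1$ attaining the value $1$, reflection across the limit line in the boundary case, and Hamilton's classification of convex eternal solutions attaining their curvature maximum (via the Harnack estimate) to conclude the limit is a grim reaper, hence half a grim reaper when a boundary persists. The argument is correct and matches the paper's proof in structure and in all essential steps.
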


\begin{remark}
\begin{enumerate}
 \item The ``Hamilton blow-up'' was defined in \cite{Hamilton2}. We will explain it in the proof of Corollary~\ref{corconvex}.
 \item There is numerical evidence given by U.\ F.\ Mayer \cite{Mayer} that there are embedded closed curves that at first get a self-intersection and then develop a singularity under the APCSF. In the free boundary setting, it seems to be the case that there are initially embedded curves that stay embedded but develop a singularity in finite time, see Example Three in Section~\ref{sec3}. We think that these curves develop a singularity at the boundary.
\end{enumerate}
\end{remark}
We also study the situation at a straight line. The result is as follows.
\begin{theorem}\label{prop}
Let $\gamma_0:\ab\to\R^2$ be an initial curve at a straight line $\Sigma$. Let $\delta_0$ be the closed curve obtained by reflecting $\gamma_0$ at $\Sigma$. Let $\ind(\delta_0)=:m$ be the index of $\delta_0$. Then $m$ is odd. Choose the orientation of $\delta_0$ such that $m$ is positive.\\
Then the area preserving curve shortening flow with Neumann free boundary conditions at the line $\Sigma$ develops a singularity in finite time if one of the following conditions is satisfied:
  \begin{enumerate}
   \item Either $A(\delta_0)<0$.
   \item Or $m\geq 3$ and $L(\delta_0)^2 < 4\pi m A(\delta_0)$.
\end{enumerate}
The singularity is of type II.
\end{theorem}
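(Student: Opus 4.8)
The plan is to use a reflection principle to convert the free boundary flow at the line $\Sigma$ into the flow of an immersed closed curve, apply the cited theorem of Escher and Ito, and finally exclude a type I singularity by a blow-up argument.

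First I would normalise coordinates so that $\Sigma$ is the $x$-axis with unit normal $\nu_\Sigma=(0,1)$. The Neumann condition (\ref{perp}) then forces $\tau_0(a)=(0,-1)$ and $\tau_0(b)=(0,1)$, so the tangent of $\gamma_0$ turns between two vertical directions and $\int_{\gamma_0}\kappa\,ds$ is an odd multiple of $\pi$, say $(2k+1)\pi$. Reflecting $\gamma_0$ across $\Sigma$ and gluing at the two endpoints yields the closed curve $\delta_0$; the gluing is smooth because perpendicular contact makes the tangents match while the evenness of the reflected curvature matches all higher derivatives. Since reflection reverses orientation, the tangent angle along the reflected arc is the negative of the one along $\gamma_0$, and computing the total turning gives $2\pi\,\ind(\delta_0)=2\int_{\gamma_0}\kappa\,ds=2(2k+1)\pi$. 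Hence $m=\ind(\delta_0)=2k+1$ is odd, and I orient $\delta_0$ so that $m>0$, i.e. $\int_{\gamma_0}\kappa\,ds=m\pi$.

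Next I would verify that reflection intertwines the two flows. If $\gamma(\cdot,t)$ solves the free boundary APCSF at $\Sigma$, then its reflected-and-glued image $\delta(\cdot,t)$ is a smooth immersed closed curve with $L(\delta)=2L(\gamma)$ and $\int_\delta\kappa\,ds=2\int_\gamma\kappa\,ds=2\pi m$, so the nonlocal terms agree, $\frac{2\pi m}{L(\delta)}=\frac{\int_\gamma\kappa\,ds}{L(\gamma)}=\bar\kappa$. Because $\nu\perp\Sigma$ at the endpoints, the evolution respects the reflection symmetry and $\delta(\cdot,t)$ remains a smooth solution of the APCSF for immersed closed curves of index $m$; conversely, by uniqueness the reflection symmetric solution of the closed flow restricts to $\gamma$. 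The two flows therefore share the same maximal time and, since reflection preserves $|\kappa|$, the same value of $\max|\kappa|$ at every time, while the enclosed area $A(\delta(\cdot,t))=A(\delta_0)$ is preserved. With this dictionary the finite-time blow-up is immediate: in case (i) we have $m\geq1$ and $A(\delta_0)<0$, and in case (ii) we have $m\geq3\geq2$ and $L(\delta_0)^2<4\pi mA(\delta_0)$, which are precisely the hypotheses of \cite{EscherIto}; thus $\delta$, and hence $\gamma$, becomes singular at some $T_{\max}<\infty$, and finiteness of $T_{\max}$ forces $\max|\kappa|\to\infty$.

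It remains to show the singularity is type II, which I would do for $\delta$ and transfer back by symmetry. The crucial preliminary is a positive lower bound for the length: in both cases $A(\delta_0)\neq0$ (in case (ii) the hypothesis forces $A(\delta_0)>0$), and since $\abs{A(\delta)}\leq\tfrac14L(\delta)^2$ while $L$ is non-increasing, one obtains $L(\delta(\cdot,t))\geq2\sqrt{\abs{A(\delta_0)}}>0$; in particular $\bar\kappa=2\pi m/L(\delta)$ stays bounded. If the singularity were type I, a Huisken-type parabolic rescaling at a point of curvature concentration would converge to a homothetically shrinking solution of the curve shortening flow, the bounded nonlocal term being of lower order and dropping out in the limit. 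In the plane such a limit is a multiply covered circle or an Abresch-Langer curve; all of these are compact and shrink to a point, which would force $L(\delta(\cdot,t))\to0$ and contradict the lower bound. Hence no type I singularity can occur and the singularity is type II. I expect this last step to be the main obstacle, since one must justify the convergence of the rescaled flow and the vanishing of the nonlocal term carefully enough to identify the limit as a closed self-shrinker and to derive the contradiction; by comparison the reflection correspondence and the parity of $m$ are routine.
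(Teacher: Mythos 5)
Your reflection argument, the parity of $m$, and the application of \cite[Proposition~9]{EscherIto} to the doubled curve $\delta_0$ all match the paper's route (Lemma~\ref{mgeq1}, the remark following it, and Proposition~\ref{propsingu}); the length lower bound coming from the isoperimetric inequality for $\delta_t$ and the resulting uniform bound on $\bar\kappa$ are likewise exactly what the paper uses.

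The gap is in the type~II step. Your list of possible type~I blow-up limits, ``a multiply covered circle or an Abresch--Langer curve,'' is incomplete, and the omitted case is the one that carries all the difficulty. Since $L(\delta_t)\geq 2\sqrt{|A(\delta_0)|}>0$ while the rescaling factors tend to infinity, the rescaled curves have length tending to infinity, so the limit is non-compact and the closed self-shrinkers are excluded at once --- in particular your claimed contradiction ``$L(\delta(\cdot,t))\to 0$'' is not how they are ruled out, though that is a minor point. What survives the infinite-length and properness constraints in the classification of planar self-shrinkers \cite{Halldorsson} is the static line $\R\times\{0\}$ (the remaining non-compact shrinkers, whose images are dense in an annulus, are killed by properness). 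The line is a proper, infinite-length, self-similarly shrinking ancient solution, and nothing in your argument excludes it. The paper's proof of Corollary~\ref{typeII} excludes it by combining the assumed type~I upper bound with the lower blow-up rate $\kappa_{max}^2(t)\geq \frac{1}{4(T-t)}$ of Lemma~\ref{blowuprate}: rescaling at points where the curvature equals its maximum, the rescaled times $\tau_j=-\kappa^2(p_j,t_j)(T-t_j)$ are trapped in a compact interval $[-c,-\tfrac14]$ of negative times, so the limit flow attains curvature $1$ at some point and cannot be flat. Without this (or an equivalent) ingredient your type~I exclusion does not close; with it added, your argument coincides with the paper's.
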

The structure of this paper is as follows. In Section~\ref{sec2} we recall some results from \cite{MeineDiss, MeinPaper} that we use in the proof of Theorem~\ref{thm12}. We explain again how strongly the condition $L_0<d_\Sigma$ influences the the behavior of $\int_{\gamma(\cdot,t)}\kappa d s$ along the flow. A bound on $|\bar\kappa|$ independent of $T_{max}$ is a consequence. If $T_{max}=\infty$, then the bound on $|\bar\kappa|$ together with \cite{MeinPaper} imply
subconvergence to a part of a circle that is possibly (partly) multicovered. We study the geometry of the limiting arc and get a contradiction to the assumptions. We refine results from \cite{MeinPaper} to show that the singularity is of type II. If the initial curve is convex we showed in \cite{MeinPaper} that the ``Hamilton blowup'' yields a grim reaper or half a grim reaper at a straight line.\\ 

In Section~\ref{sec3}, we give examples of curves that do satisfy the conditions of Theorem~\ref{thm12} and Corollary~\ref{corconvex}. \\

Section~\ref{sec4} contains the proof of Theorem~\ref{prop}. We reflect the curves at the line $\Sigma$ and apply the results from \cite{EscherIto}. We combine this with results from \cite{MeinPaper} to show that the singularity is of type II.

\section*{Acknowledgment}
 The author would like to thank Jonas Hirsch for very useful discussions. Furthermore, the author would like to express her gratitude to the referee for all the useful comments and suggestions. The author is funded by the Deutsche Forschungsgemeinschaft (DFG), LA 3444/1-1 and MA 7559/1-1. 
 \section{Singularities of type II in finite time} \label{sec2}
 
 \begin{notations}
 Let $\gamma:\ab\to\Rzwei$ be a piecewise smooth, regular curve and let $h:[a,b]\to\R^n$, $n\in\{1,2\}$, be a $C^1$-map, $h=h(p)$. 
  We denote by $\ps h\defi \frac{1}{|\pp \gamma|}\pp h$ the derivative with respect to arclength of $h$. We define $d s \defi |\partial_p \gamma|d p$. We recall the formula for the curvature of $\gamma$
  \begin{align*}
 \kappa (p) = \langle \partial^2_s \gamma(p),\nu(p)\rangle,
\end{align*}
where $\nu= J\tau=J\ps \gamma$ is the normal of the curve $\gamma$, $
 J$ is the rotation by $+\frac{\pi}{2}$ in the plane.
 \end{notations}

 \begin{definition}
  We call a smooth, regular, convex, simple and smoothly closed curve $ f:\mathbb S^1\to\Rzwei$ a \emph{support curve}. We assume $\fsig$ to be parametrized by arclength.
  We orient $f$ positively so that $ \kappa_\Sigma\geq 0$. We use the notation $$\Sigma\defi  f\left(\mathbb S^1\right). $$
  The curve $\Sigma$ separates $\Rzwei$ into a bounded and an unbounded domain. The bounded domain is enclosed by $\Sigma$ and is denoted by $G_\Sigma$.\\
  We define $d_\Sigma \defi \min\{|x-y|:x,y\in\Sigma, \tau_\Sigma(x)=-\tau_\Sigma(y)\}$, the smallest distance between two parallel lines in $\Rzwei$ that touch $G_\Sigma$ (the \emph{minimum width}). 
\end{definition}

\begin{definition}
 A planar, smooth, regular curve $\gamma_0:[a,b]\to\Rzwei$ is called \emph{initial curve} if it satisfies the conditions
\begin{align*}
 \gamma_0(a),\gamma_0(b)&\in\Sigma\\
 \tau_0(a) =  -\nu_\Sigma (\gamma_0(a)), &\ \ \ \ \tau_0(b) = \nu_\Sigma (\gamma_0(b)),
\end{align*}
  where $\tau_0=\ps \gamma_0$ is the tangent of $\gamma_0$ and $\nu_\Sigma = J\, \ps f \circ f^{-1}:\Sigma \to\R^2$ is the inner unit normal of~$\Sigma$ (defined on the image $\Sigma= f\left(\mathbb S^1\right)$). 
\end{definition}

\begin{definition}
 Let $\gamma_0:[a,b]\to\Rzwei$ be an initial curve. A smooth family of smooth, regular curves $\gamma:[a,b]\times[0,T)\to \Rzwei $ that satisfies  
\begin{alignat}{2}
 \frac{\partial \gamma}{\partial t} (p,t) = ( \kappa(p,t)& - \bar \kappa(t))\nu(p,t)\hspace*{1.5cm}   & \forall (p,t)&\in [a,b]\times [0,T),\nonumber\\
  \gamma(p,0)& =\gamma_0(p) & \forall p&\in [a,b],\nonumber \\[0.2cm]
    \gamma(a,t),\gamma(b,t)&\in\Sigma &\forall t& \in [0,T),\label{1}\\[0.1cm]
 \tau(a,t) = - \nu_\Sigma (\gamma(a,t)), &\ \  \tau(b,t) = \nu_\Sigma (\gamma(b,t)), & \forall t &\in[0,T),\nonumber
\end{alignat}
is called a \emph{solution of the area preserving curve shortening problem with Neumann free boundary conditions}. Here, $\bar\kappa$ denotes the average of the curvature,
\begin{align*}
 \bar\kappa(t)\defi \frac{\int \kappa(p,t)d s}{\int d s}=  \frac{\int \kappa(p,t)d s}{L(\gamma(\cdot,t))},
\end{align*}
and $\nu_\Sigma$ is the \emph{inner} unit normal of $\Sigma$. 
Here and in the rest of the paper, we use the notation $\gamma_t\defi \gamma(\cdot,t)$.

\end{definition}

\begin{remark}
 For a smooth initial curve, existence and uniqueness of the solution of (\ref{1}) is standard. One gets short time existence on a short time interval $[0,T_0]$. The solution can be extended up to a maximal time of existence $T_{max}\leq\infty$. By regularity theory for parabolic Neumann problems the curves satisfy 
 \begin{align*}
 \gamma \in C^{2+\alpha, 1 +\frac{\alpha}{2}}\left([a,b]\times [0,T_{max}),\mathbb R^2\right)\cap C^\infty\left([a,b]\times (0,T_{max}),\mathbb R^2\right) ,\ \alpha\in(0,1),
\end{align*}
where $C^{2+\alpha, 1 +\frac{\alpha}{2}}$ denotes the usual parabolic H\"older space.
 If $T_{max}<\infty$ then $\max_{\ab}|\kappa|(\cdot,t)\to\infty$ ($t\to T_{max}$). A source for the existence for closed curves moving by a geometric flow with a constraint is for example \cite{DziukKuwertSch}. The technique how to transform the free boundary problem into a standard Neumann boundary problem can be found in \cite{StahlPaper1, StahlPaper2}. For our specific situation a sketch of the existence and regularity result is in \cite[Proposition~2.4]{MeinPaper}.
\end{remark}

    \begin{definition}
     Let $\Sigma$ be a support curve and let $\gamma:[a,b]\to\Rzwei$ be a curve with $\gamma(a),\gamma(b)\in\Sigma$. Then we call a curve $\sigma:[\tilde a,\tilde b]\to\Sigma\subset\Rzwei$ with $\sigma(\tilde a)=\gamma(b)$ and $\sigma(\tilde b)=\gamma(a)$ a \emph{boundary curve on $\Sigma$ with respect to $\gamma$}.
    \end{definition}
 
    \begin{definition} \label{orientedVolume} 
      Let $\gamma:\ab\times[0,T)\to\Rzwei$ be a solution of (\ref{1}).
      Consider  a $C^1$-family of smooth curves $\sigma:[\tilde a,\tilde b]\times [0,T)\to\Sigma$ with $\sigma(\tilde a,t)=\gamma(b,t)$ and $\sigma(\tilde b,t)=\gamma(a,t)$ for all $t\in[0,T)$, i.e.~$\sigma_t\defi\sigma(\cdot,t)$ is a boundary curve on $\Sigma$ with respect to $\gamma_t = \gamma(\cdot,t)$. 
      Then for each $ t\in[0,T)$, we call the following expression the \emph{oriented area enclosed by $\gamma_t$ and $\Sigma$:}
      \begin{align} \label{enclosedVolume}
       A(\gamma_t + \sigma_t)\defi \frac{1}{2} \int_{\gamma_t} p^1 \de p^2 - p^2 \de p^1 + \frac{1}{2} \int_{\sigma_t} p^1 \de p^2 - p^2 \de p^1.
      \end{align}
    \end{definition}
    \begin{remark}
     Our curves $\gamma_t$ are regular. But it can happen that a curve $\sigma_t$ is not regular. For our situation, this will only happen if $\gamma_t(a)=\gamma_t(b)$. Then $\sigma_t$ will be just the point $\sigma_t \equiv \gamma_t(a)=\gamma_t(b)$. This is not important for the definition of the enclosed area because in such a situation $\gamma_t$ is already closed and the second integral in (\ref{enclosedVolume}) vanishes.
    \end{remark}

    We recall some basic properties proved in \cite{MeinPaper}.
    \begin{lemma}[Lemma~2.6, Lemma~2.11 and Corollary~2.14 \cite{MeinPaper}] \label{lemma:areapres}
    Let $\gamma_0:\ab\to\R^2$ be a smooth initial curve.
     Then we have the following properties: The area preserving curve shortening flow is curve shortening and area preserving, i.e.\ $\frac{d}{dt}L(\gamma_t)\leq 0$ and $\frac{d}{dt}A(\gamma_t,\sigma_t)=0$ on $\nut$, where $\gamma:\ab\times\nut\to\R^2$ is a solution of (\ref{1}) and $\sigma:[\tilde a,\tilde b]\times\nut\to\Sigma$ is a $C^1$-family of boundary curve on $\Sigma$ with respect to $\gamma$.\\
     As the domain $G_\Sigma$ is convex and as $\gamma_0$ goes into $\R^2\setminus G_\Sigma$ at $\gamma_0(a)$ and comes back to $\Sigma$ from $\R^2\setminus G_\Sigma$ at $\gamma_0(b)$ the flow improves convexity to strict convexity\footnote{The author emphasizes that convexity is probably not preserved if one allows the curve to meet $\Sigma$ perpendicularly from \emph{inside} $G_\Sigma$ at the endpoints.}. This is, $\kappa_0\geq 0$ for the initial curve implies $\kappa>0$ on $\ab\times(0,T)$.
    \end{lemma}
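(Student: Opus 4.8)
The three assertions follow, respectively, from the first variation of length, the first variation of the enclosed area, and a maximum principle for the curvature.

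\emph{Curve shortening.} Write $V\defi\pt\gamma=(\kappa-\bar\kappa)\nu$ for the velocity. Starting from the first variation of length and integrating by parts with $\ps\tau=\kappa\nu$,
\begin{align*}
 \frac{d}{dt}L(\gamma_t)=\int_{\gamma_t}\langle\tau,\ps V\rangle\,\de s=\big[\langle\tau,V\rangle\big]_a^b-\int_{\gamma_t}\kappa\,\langle\nu,V\rangle\,\de s .
\end{align*}
Since $V$ is a normal field, $\langle\tau,V\rangle\equiv0$ and the boundary term drops out; the Neumann condition is what makes this normal velocity admissible, as $\nu=\mp\tau_\Sigma$ at the endpoints, so that they slide along $\Sigma$. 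Using $\int_{\gamma_t}\kappa\,\de s=\bar\kappa\,L(\gamma_t)$ and then Cauchy--Schwarz,
\begin{align*}
 \frac{d}{dt}L(\gamma_t)=-\int_{\gamma_t}\kappa(\kappa-\bar\kappa)\,\de s=-\int_{\gamma_t}\kappa^2\,\de s+\bar\kappa^2L(\gamma_t)\le0 .
\end{align*}

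\emph{Area preservation.} I would differentiate the two line integrals in (\ref{enclosedVolume}) separately. For a curve moving with velocity $W$, integration by parts in the parameter turns the variation of $\tfrac12\int p^1\de p^2-p^2\de p^1$ into a bulk term $-\int\langle W,\nu\rangle\,\de s$ plus the boundary contribution $\tfrac12\big[p^1\pt p^2-p^2\pt p^1\big]$ at the endpoints. On $\gamma_t$ the bulk term is $-\int_{\gamma_t}(\kappa-\bar\kappa)\,\de s=0$ by the very definition of $\bar\kappa$ — this is the structural origin of area preservation. On $\sigma_t$ the bulk term also vanishes, because $\sigma_t$ lies on the fixed curve $\Sigma$, so $\pt\sigma$ is tangent to $\Sigma$ and orthogonal to the normal of $\sigma_t$. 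Finally, the two boundary contributions cancel: they are evaluated at $\gamma_t(a),\gamma_t(b)$ and at $\sigma_t(\tilde a)=\gamma_t(b),\ \sigma_t(\tilde b)=\gamma_t(a)$, and differentiating these identities in $t$ gives $\pt\sigma(\tilde a,t)=\pt\gamma(b,t)$, $\pt\sigma(\tilde b,t)=\pt\gamma(a,t)$, so the terms at $\gamma_t(a)$ and $\gamma_t(b)$ enter the two pieces with opposite signs. Hence $\frac{d}{dt}A(\gamma_t+\sigma_t)=0$. (If $\gamma_t(a)=\gamma_t(b)$ the curve $\sigma_t$ is a point and its integral and boundary terms are absent, so the conclusion is immediate.)

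\emph{Improvement to strict convexity.} From the commutator $[\pt,\ps]=\kappa(\kappa-\bar\kappa)\ps$ and $\pt\tau=(\ps\kappa)\nu$ one obtains the curvature evolution
\begin{align*}
 \pt\kappa=\partial_s^2\kappa+\kappa^2(\kappa-\bar\kappa),
\end{align*}
a linear parabolic equation $\pt\kappa=\partial_s^2\kappa+c\,\kappa$ with $c=\kappa(\kappa-\bar\kappa)$ bounded on each strip $\ab\times[0,T']$, $T'<T$. Differentiating the perpendicularity relations $\tau(b,t)=\nu_\Sigma(\gamma(b,t))$ and $\tau(a,t)=-\nu_\Sigma(\gamma(a,t))$ in $t$, using $\pt\tau=(\ps\kappa)\nu$ on the left and the Weingarten relation $\partial_s\nu_\Sigma=-\kappa_\Sigma\tau_\Sigma$ on the right, yields the oblique boundary condition
\begin{align*}
 \partial_{\mathrm{out}}\kappa=-\kappa_\Sigma(\kappa-\bar\kappa)\qquad\text{at }p=a,b,
\end{align*}
$\partial_{\mathrm{out}}$ denoting the outward derivative on $\ab$. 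I would now let $t_0>0$ be the first time at which $\min_{\ab}\kappa(\cdot,t)$ reaches $0$; on $[0,t_0]$ one has $\kappa\ge0$, hence $\bar\kappa\ge0$. If the zero minimum is attained in the interior, the strong parabolic maximum principle forces $\kappa\equiv0$ on $\ab\times[0,t_0]$, which is impossible for an initial curve meeting $\Sigma$ perpendicularly from the exterior (a straight segment leaving $G_\Sigma$ along an outer normal cannot return to $\Sigma$). If it is attained at a boundary point, Hopf's boundary point lemma gives $\partial_{\mathrm{out}}\kappa<0$, whereas the boundary condition evaluated there gives $\partial_{\mathrm{out}}\kappa=\kappa_\Sigma\bar\kappa\ge0$ — a contradiction. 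Thus $\min_{\ab}\kappa$ never reaches $0$ for $t>0$, i.e.\ $\kappa>0$ on $\ab\times(0,T)$.

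\emph{Main obstacle.} The length and area steps are variational and essentially routine. The real content is the convexity step, and its two delicate points are the correct derivation of the Robin condition for $\kappa$ and the sign in the Hopf argument: it is precisely the \emph{exterior} configuration that produces the favorable sign $\kappa_\Sigma\bar\kappa\ge0$ (together with $\bar\kappa\ge0$), which is incompatible with a boundary minimum of $\kappa$. In the interior configuration the sign reverses and convexity need not be preserved, matching the footnote to the lemma.
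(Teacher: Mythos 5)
Your first two steps are correct and are the standard variational arguments: in the length computation the boundary term $[\langle\tau,V\rangle]_a^b$ vanishes because the velocity is purely normal, and Cauchy--Schwarz gives $\frac{d}{dt}L\le 0$; in the area computation the two bulk terms vanish (on $\gamma_t$ by the definition of $\bar\kappa$, on $\sigma_t$ because $\pt\sigma$ and the parameter derivative of $\sigma_t$ are both tangent to $\Sigma$) and the endpoint contributions cancel in pairs. Also, your oblique boundary condition, i.e.\ $\ps\kappa(a,t)=(\kappa(a,t)-\bar\kappa(t))\kappa_\Sigma(\gamma(a,t))$ and $\ps\kappa(b,t)=-(\kappa(b,t)-\bar\kappa(t))\kappa_\Sigma(\gamma(b,t))$, is exactly Lemma~2.12 of \cite{MeinPaper}, which is the identity the cited proof of the convexity statement is built on, so your overall route is the expected one.

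The convexity step, however, has a genuine gap. You anchor the argument at ``the first time $t_0>0$ at which $\min_{\ab}\kappa(\cdot,t)$ reaches $0$'', which presupposes $\min_{\ab}\kappa_0>0$. The hypothesis of the lemma is only $\kappa_0\ge 0$, and the case that makes ``improves convexity to \emph{strict} convexity'' nontrivial is precisely when $\kappa_0$ has zeros. Then $\min_{\ab}\kappa(\cdot,0)=0$, no such $t_0>0$ exists, and your argument never starts: the strong maximum principle and the Hopf lemma are applied at $t_0$ and need a parabolic past $[0,t_0]$ on which $\kappa\ge 0$ is already known, which is exactly what is unavailable at $t=0$. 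In particular, nothing in your proof excludes that $\kappa$ dips below zero immediately after $t=0$. The missing ingredient is the \emph{weak} preservation of $\kappa\ge 0$: fix $T'<T$, choose $\lambda$ larger than the bound for $c=\kappa(\kappa-\bar\kappa)$ on $\ab\times[0,T']$, and suppose $v\defi e^{-\lambda t}\kappa$ had a negative space--time minimum on $\ab\times[0,T']$; it must occur at some $t_1>0$ since $v(\cdot,0)\ge 0$. An interior minimum contradicts $\pt v-\ps^2 v=(c-\lambda)v>0$ there; at a boundary minimum, say at $p=a$, one has $\kappa(a,t_1)=\min_{\ab}\kappa(\cdot,t_1)\le\bar\kappa(t_1)$ \emph{automatically}, so the Robin condition gives $\ps v(a,t_1)\le 0$, while the Hopf lemma (applicable because $v(\cdot,0)\ge 0>v(a,t_1)$, so $v$ is not constant) gives $\ps v(a,t_1)>0$, a contradiction. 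Note that here the favorable sign comes for free from $\kappa-\bar\kappa\le 0$ at a spatial minimum and does not require $\bar\kappa\ge 0$. Once $\kappa\ge 0$ on $\ab\times[0,T)$ is secured, your strong maximum principle, Hopf, and straight-segment arguments (now available at every positive time, with $\bar\kappa\ge 0$ as you use it) correctly yield $\kappa>0$ on $\ab\times(0,T)$; just note additionally that Hopf at the boundary requires $\kappa\not\equiv 0$, a degenerate case your interior argument already rules out.
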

    \begin{remark}\begin{enumerate}
                   \item  If $\gamma_0$ is a smooth initial curve then a $C^1$-family of boundary curves $\sigma$ on $\Sigma$ with respect to $\gamma$ exists. This was proved in \cite[Lemma~2.9]{MeinPaper}. Under the condition $L_0<d_\Sigma$ we will explain the construction of such a family below.
                   \item We emphasize that it is allowed that one of the boundary curves $\sigma_t$ consists only of one point (namely of the endpoints $\gamma_t(a)=\gamma_t(b)$). Important in the proof of Lemma~\ref{lemma:areapres} is only that one has to find a family of boundary curves where the enclosed area is \emph{continuous} in $t$.
                  \end{enumerate}
    \end{remark}
    
    \begin{lemma}[Construction of the boundary curves]\label{constr}
    Let $\gamma_0:[a,b]\to\R^2$ be a smooth initial curve with $L_0< d_\Sigma$. Then the solution of (\ref{1}) has the following property: \\
    The endpoints $\gamma_t(a), \gamma_t(b)$ divide $\Sigma$ into two pieces for each $t\in[0,T)$. The angle of the unit normal of one component of $\Sigma\setminus\{\gamma_t(a),\gamma_t(b)\}$ turns more than $\pi$ (and less or equal than $2\pi$). The unit normal of other component -- we will call it \emph{the short piece} -- turns an angle of less than $\pi$. Note that the (degenerate) case where the short piece is just a point is possible. This only happens if $\gamma_t(a)=\gamma_t(b)$.\\
    We denote by $\sigma(t)$ the curve from $\gamma_t(b)$ to $\gamma_t(a)$ along the short piece of $\Sigma$. After reparametrizations we get a $C^1$-family of boundary curves $\sigma:[\tilde a,\tilde b]\times [0,T)\to\Sigma$ with respect to $\gamma$, where $\sigma_t$ are regular smooth curves except in the degenerate case where $\sigma_t\equiv\gamma_t(a)=\gamma_t(b)$. \\
    As a consequence, the enclosed area $A(\gamma_t + \sigma_t)$ is constant along the flow.
    \end{lemma}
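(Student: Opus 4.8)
The plan is to reduce the whole statement to one geometric fact: under $L_0<d_\Sigma$ the two endpoints $\gamma_t(a),\gamma_t(b)$ are never antipodal on $\Sigma$, and then to read off both the ``short piece'' dichotomy and the area conservation from results already available. First I would invoke length monotonicity. By Lemma~\ref{lemma:areapres} we have $\frac{d}{dt}L(\gamma_t)\leq 0$, hence $L(\gamma_t)\leq L_0<d_\Sigma$ for all $t\in\nut$. Since $\gamma_t(a)$ and $\gamma_t(b)$ are joined by the curve $\gamma_t$, their Euclidean distance satisfies $|\gamma_t(a)-\gamma_t(b)|\leq L(\gamma_t)<d_\Sigma$. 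By the definition $d_\Sigma=\min\{|x-y|:x,y\in\Sigma,\ \tau_\Sigma(x)=-\tau_\Sigma(y)\}$, any pair of antipodal points on $\Sigma$ has distance at least $d_\Sigma$; therefore $\gamma_t(a)$ and $\gamma_t(b)$ cannot be antipodal for any $t\in\nut$.

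Next I would extract the turning dichotomy. As $\Sigma$ is convex and positively oriented, $\kappa_\Sigma\geq 0$, so $\nu_\Sigma$ turns monotonically along $\Sigma$ with total turning $\int_\Sigma\kappa_\Sigma\, ds=2\pi$. Splitting $\Sigma$ at the two endpoints yields two arcs whose turnings $\theta$ and $2\pi-\theta$ are the non-negative integrals of $\kappa_\Sigma$ over each arc. The normals at the two endpoints are antiparallel precisely when $\theta=\pi$; since the endpoints are never antipodal, $\theta\neq\pi$, so exactly one arc has turning $<\pi$ (the short piece) while the other has turning $>\pi$ and $\leq 2\pi$. This identifies $\sigma(t)$, the arc from $\gamma_t(b)$ to $\gamma_t(a)$ along the short piece, unambiguously; in the limiting case $\gamma_t(a)=\gamma_t(b)$ the short piece degenerates to a point of turning $0$, which is permitted.

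For the regularity I would argue as follows. By the parabolic regularity of the flow recalled above, the endpoints $\gamma_t(a),\gamma_t(b)$ depend $C^1$ on $t$ (indeed smoothly for $t>0$). Composing with the $C^1$ arclength chart of the embedded curve $\Sigma$ expresses the two arc-endpoints as $C^1$ functions of $t$, and an affine reparametrization over a fixed interval $[\tilde a,\tilde b]$ produces the claimed $C^1$-family $\sigma:[\tilde a,\tilde b]\times\nut\to\Sigma$, exactly as in \cite[Lemma~2.9]{MeinPaper}. The point where I expect to be most careful is the global consistency of the labeling together with joint $C^1$ regularity across the degenerate times: because the turning of the short piece stays strictly below $\pi$ for all $t$ (the regimes $<\pi$ and $>\pi$ are separated and the value $\pi$ is never attained), the choice of short piece cannot jump, and the family passes continuously through the configuration where $\sigma_t$ collapses to the single point $\gamma_t(a)=\gamma_t(b)$.

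Finally, the conclusion is immediate: $\sigma$ is a $C^1$-family of boundary curves on $\Sigma$ with respect to $\gamma$, so the area-preservation part of Lemma~\ref{lemma:areapres} yields $\frac{d}{dt}A(\gamma_t+\sigma_t)=0$, i.e.\ the enclosed area is constant along the flow. The only genuine obstacle I anticipate is verifying the $C^1$ character of the reparametrized family $\sigma$ near the degenerate times; everything else follows from length monotonicity, the convexity of $\Sigma$, and the already-established area invariance for arbitrary $C^1$-families of boundary curves.
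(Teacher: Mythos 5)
Your proposal is correct and follows essentially the same route as the paper: length monotonicity plus $L_0<d_\Sigma$ prevents the endpoints from becoming antipodal, so the ``short piece'' is well defined and cannot jump in $t$, the $C^1$ regularity of $\sigma$ follows from that of $\gamma$ and the smoothness of $\Sigma$, and area invariance is then read off from Lemma~\ref{lemma:areapres}. You merely spell out in more detail (via the turning dichotomy $\theta\neq\pi$) what the paper's proof leaves implicit.
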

    \begin{remark}
     The ``short piece'' is not the piece with the shorter length. It is the piece where the image of the unit normal on $\mathbb S^1$ is shorter.
    \end{remark}
    
    \begin{proof}
     The construction of the boundary curve is quite explicit. The only thing that we have to show is that $\sigma_t$ is $C^1$ (and in particular continuous) with respect to $t$. The continuity follows from that fact that $L(\gamma_t)\leq L_0< d_\Sigma$. By this property the short piece cannot jump from time to time, i.e.\ the short piece of $\Sigma$ varies continuously in $t$. Since $\gamma$ is in fact $C^1$ in $t$ and as $\Sigma$ is smooth, $\sigma$ is a $C^1$ family of boundary curves.
    \end{proof}

The following result comes from analyzing the geometric properties of a convex curve that satisfy the Neumann free boundary conditions outside a convex domain at the endpoints.

\begin{proposition}\label{wi}
Let $\Sigma\subset\R^2$ be a positively oriented convex smooth Jordan curve and let $\gamma:\ab\to\R^2$ be a $C^2$-curve with $\kappa>0$ and
\begin{align*}
 \gamma(a),\gamma(b)&\in\Sigma,\\
\tau(a) = - \nu_\Sigma (\gamma(a)), &\ \  \tau(b) = \nu_\Sigma (\gamma(b)),
\end{align*}
where $\nu_\Sigma$ is the inner unit normal of $\Sigma$. 
 Then we have that $\int\kappa d s\geq \pi$. 
 %Furthermore, if $\gamma(a)=\gamma(b)$ then $\int\kappa d s\geq 3\pi$.
\end{proposition}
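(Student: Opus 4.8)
The plan is to translate the statement into one about the tangent angle of $\gamma$ and then to rule out a total curvature below $\pi$ by a contradiction argument. Parametrize $\gamma$ by arclength and write $\tau=(\cos\theta,\sin\theta)$ for a continuous lift of the tangent angle. Since $\kappa>0$, the angle $\theta$ is strictly increasing and $\int\kappa\,ds=\theta(b)-\theta(a)=:\Delta\theta>0$. Put $p:=\gamma(a)$ and $q:=\gamma(b)$; by the boundary conditions $\tau(a)=-\nu_\Sigma(p)$ is the \emph{outer} unit normal of $\Sigma$ at $p$, while $\tau(b)=\nu_\Sigma(q)$ is the \emph{inner} unit normal at $q$. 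Assume for contradiction that $\Delta\theta<\pi$. First observe that then $p\neq q$: if $p=q$, the vectors $\tau(a)$ and $\tau(b)$ would be opposite normals of $\Sigma$ at one point, forcing $\Delta\theta$ to be an odd multiple of $\pi$, hence $\Delta\theta\geq\pi$. So under the contradiction hypothesis the chord direction $\omega:=\angle(q-p)$ is well defined.

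The first key step is a chord-direction estimate for convex arcs. Because $q-p=\int_a^b\tau\,ds$ is an average of unit vectors whose angles all lie in the interval $[\theta(a),\theta(b)]$ of length $\Delta\theta<\pi$, I claim $\theta(a)\leq\omega\leq\theta(b)$. This follows since the integrand has nonnegative component along the direction $\theta(a)+\tfrac{\pi}{2}$ (namely $\sin(\theta(s)-\theta(a))\geq0$) and along the direction $\theta(b)-\tfrac{\pi}{2}$ (namely $\sin(\theta(b)-\theta(s))\geq0$), which sandwiches the angle of the chord between $\theta(a)$ and $\theta(b)$.

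The second ingredient is the convexity of $G_\Sigma$. The tangent line to $\Sigma$ at $p$ is a support line of the convex domain $G_\Sigma$, with $G_\Sigma$ on the inner-normal side; since $q\in\Sigma\subset\overline{G_\Sigma}$ and $\tau(a)$ is the outer normal at $p$, this gives $\langle q-p,\tau(a)\rangle\leq0$. Similarly the support line at $q$ yields $\langle p-q,\nu_\Sigma(q)\rangle\geq0$, i.e. $\langle q-p,\tau(b)\rangle\leq0$. Rewriting the first inequality as $\cos(\omega-\theta(a))\leq0$ and using $\omega-\theta(a)\in[0,\Delta\theta]\subset[0,\pi)$ forces $\omega-\theta(a)\geq\tfrac{\pi}{2}$; rewriting the second as $\cos(\omega-\theta(b))\leq0$ and using $\theta(b)-\omega\in[0,\Delta\theta]\subset[0,\pi)$ forces $\theta(b)-\omega\geq\tfrac{\pi}{2}$.

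Adding these two inequalities gives $\Delta\theta=\theta(b)-\theta(a)\geq\pi$, contradicting $\Delta\theta<\pi$; hence $\int\kappa\,ds=\Delta\theta\geq\pi$. I expect the main obstacle to be the careful bookkeeping of sign conventions and angle ranges: one must verify both that the chord lies between the two tangent directions (so that the relevant angles are confined to $[0,\pi)$) and that each support-line condition translates into the correct cosine inequality, so that the two $\tfrac{\pi}{2}$-gaps emerge cleanly. The equality case, a semicircle meeting a straight support line perpendicularly, shows the bound $\pi$ is sharp and serves as a useful consistency check.
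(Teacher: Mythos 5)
Your proof is correct. Let me note first that the paper does not actually prove Proposition~\ref{wi} internally: its ``proof'' consists of the observation that Proposition~3.1 of \cite{MeinPaper} establishes the inequality and that the argument there uses only strict convexity and the boundary conditions, so a line-by-line comparison with your argument is not possible from the text of this paper alone. What you supply is a complete, self-contained, elementary replacement, and every step checks out: with the paper's conventions ($\nu=J\tau$, $\kappa=\langle\partial_s^2\gamma,\nu\rangle$) one indeed has $\kappa=\theta'$, so $\int\kappa\,ds=\Delta\theta$; the degenerate case $\gamma(a)=\gamma(b)$ correctly forces $\tau(b)=-\tau(a)$ and hence $\Delta\theta\in\pi+2\pi\Z$, $\Delta\theta\geq\pi$; the chord-direction estimate is justified by the two inequalities $\int\sin(\theta(s)-\theta(a))\,ds\geq0$ and $\int\sin(\theta(b)-\theta(s))\,ds\geq0$, whose corresponding closed half-planes intersect (for $0<\Delta\theta<\pi$) exactly in the cone of directions with angle in $[\theta(a),\theta(b)]$; and the supporting-line property of the convex domain $G_\Sigma$ at $p$ and at $q$ gives precisely $\langle q-p,\tau(a)\rangle\leq0$ and $\langle q-p,\tau(b)\rangle\leq0$, which with the angle confinement yields the two $\tfrac{\pi}{2}$-gaps and the contradiction. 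The payoff of your route is that it isolates the only two facts really needed (monotone tangent angle of $\gamma$, convexity of $G_\Sigma$ through its support lines), makes the sharpness visible via the perpendicular semicircle, and removes the dependence on the external reference; it would serve well as an inline proof of the proposition.
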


\begin{proof}
In \cite[Proposition~3.1]{MeinPaper}, it was shown that the geometric situation of the curves imply $\int\kappa d s\geq \pi$. The statement there was formulated for a solution of (\ref{1}). But the only properties of the curves that are used in the proof are strict convexity and the boundary conditions. 
\end{proof}

In order to be able to use results from \cite{MeinPaper} we need to show that $\bar\kappa(t)$ is bounded in $L^\infty$. As we want to show results about flows with infinite lifespan, we want the bound to be independent of the maximal time of existence $T_{max}$.

\begin{proposition}\label{est:barkappa}
 Let $\gamma_0:\ab\to\Rzwei$ be an initial curve (not necessarily convex) with $L_0<d_\Sigma$. Consider the solution of the APCSF (\ref{1}) on the maximal time interval of existence $[0,T_{max})$. Choose $l\in \Z$ such that $(2l - 2)\pi<\int_{\gamma_0}\kappa d s<2l\pi$. Then we have that
 \begin{align*}
  (2l-2) \pi < \int_{\gamma_t} \kappa  d s < 2l\pi \ \ \text{ for all } t\in[0,T_{max}).
 \end{align*}

\end{proposition}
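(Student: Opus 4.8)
The plan is to combine continuity with an obstruction that prevents $\int_{\gamma_t}\kappa\, d s$ from ever crossing an even multiple of $\pi$. The open intervals $\big((2k-2)\pi,\,2k\pi\big)$, $k\in\Z$, cover $\R$ with their only gaps at the points $2\pi\Z$, and by hypothesis $\int_{\gamma_0}\kappa\, d s$ lies in the interval indexed by $l$. So it suffices to establish two facts: that $t\mapsto\int_{\gamma_t}\kappa\, d s$ is continuous on $[0,T_{max})$, and that $\int_{\gamma_t}\kappa\, d s\notin 2\pi\Z$ for every $t$. The intermediate value theorem then traps the total curvature in $\big((2l-2)\pi,\,2l\pi\big)$ for all time. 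Continuity is immediate from the regularity recalled above: on $(0,T_{max})$ the flow is smooth, and up to $t=0$ it lies in $C^{2+\alpha,\,1+\frac{\alpha}{2}}$, so $\kappa$ and $d s$ depend continuously on $t$ and hence so does their integral.

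The key step is to pin down $\int_{\gamma_t}\kappa\, d s$ modulo $2\pi$ from the boundary data alone. Writing $\theta(\cdot,t)$ for a continuous lift of the tangent angle of $\gamma_t$, one has $\int_{\gamma_t}\kappa\, d s=\theta(b,t)-\theta(a,t)$. The Neumann conditions $\tau(a,t)=-\nu_\Sigma(\gamma(a,t))$ and $\tau(b,t)=\nu_\Sigma(\gamma(b,t))$ fix these angles modulo $2\pi$: if $\psi_a(t)$ and $\psi_b(t)$ denote the angles of the inner normals $\nu_\Sigma$ at the two endpoints, then $\theta(a,t)\equiv\psi_a(t)+\pi$ and $\theta(b,t)\equiv\psi_b(t)\pmod{2\pi}$. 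Therefore $\int_{\gamma_t}\kappa\, d s\equiv\psi_b(t)-\psi_a(t)-\pi\pmod{2\pi}$.

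Consequently, $\int_{\gamma_t}\kappa\, d s\in 2\pi\Z$ would force $\psi_b(t)-\psi_a(t)\equiv\pi\pmod{2\pi}$, i.e.\ $\nu_\Sigma(\gamma(a,t))=-\nu_\Sigma(\gamma(b,t))$, equivalently $\tau_\Sigma(\gamma(a,t))=-\tau_\Sigma(\gamma(b,t))$; that is, the endpoints $\gamma(a,t)$ and $\gamma(b,t)$ would be antipodal on $\Sigma$. This is precisely what $L_0<d_\Sigma$ excludes: by Lemma~\ref{lemma:areapres} the length is non-increasing, so $|\gamma(a,t)-\gamma(b,t)|\leq L(\gamma_t)\leq L_0<d_\Sigma$, whereas any antipodal pair has distance at least $d_\Sigma$ by the definition of the minimum width. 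Hence the endpoints are never antipodal and $\int_{\gamma_t}\kappa\, d s\notin 2\pi\Z$, which together with continuity and the starting value proves the claim.

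I expect the only delicate part to be the bookkeeping of the angle congruences and sign conventions, in particular getting the extra $\pi$ onto the correct endpoint and confirming that $\nu_\Sigma(\gamma(a,t))=-\nu_\Sigma(\gamma(b,t))$ really is equivalent to the antipodality condition $\tau_\Sigma=-\tau_\Sigma$ used to define $d_\Sigma$. Once the modulo-$2\pi$ identity is secured, the remaining ingredients — continuity of the total curvature, the chord–length bound $|\gamma(a,t)-\gamma(b,t)|\leq L(\gamma_t)$, and the intermediate value theorem — are routine.
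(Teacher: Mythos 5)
Your proposal is correct and follows essentially the same route as the paper: the length bound $L(\gamma_t)\leq L_0<d_\Sigma$ forces the endpoints never to be antipodal, the Neumann conditions then exclude $\int_{\gamma_t}\kappa\, d s\in 2\pi\Z$, and continuity in $t$ traps the total curvature in the interval $\left((2l-2)\pi,2l\pi\right)$. You merely spell out the tangent-angle bookkeeping that the paper leaves implicit.
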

    
     \begin{proof}
      By definition of $ d_\Sigma$ and by the curve shortening property the points $\gamma_t(a)$ and $\gamma_t(b)$ are never ``antipodal points''. This means that $\tau_\Sigma(\gamma_t(a))\not = -\tau_\Sigma(\gamma_t(b))$ for each $t\in[0,T)$. Taking into account the boundary conditions $\nu_\Sigma(\gamma_t(a)) = -\tau(a,t)$ and $\nu_\Sigma(\gamma_t(b)) =\tau(b,t)$ for the inner unit normal $\nu_\Sigma= J\tau_\Sigma$ we get that $$\tau(a,t)\not =\tau(b,t)$$ for each $t\in[0,T)$. This particularly implies that $\int_{\gamma_t}\kappa d s\not \in 2\pi\Z$ for each $t\in[0,T)$. The continuity of $\int_{\gamma_t}\kappa d s$ with respect to $t$ implies the result.
     \end{proof}

\begin{proposition}\label{prop:Lgr0}
Let $\gamma:\ab\times[0,T_{\text{max}})\to\Rzwei$ be the solution of (\ref{1}) where the initial curve $\gamma_0:\ab\to\Rzwei$ satisfies $L_0< d_\Sigma$. 
Furthermore, we assume that $\gamma_0$ satisfies
\begin{align*}
 A(\gamma_0 +\sigma_0)\neq 0,
\end{align*}
where $\gamma_0 + \sigma_0$ is the extension of $\gamma_0$ via the ``short piece'' along $\Sigma$ defined in Lemma~\ref{constr}. 
Then there is a constant $\delta>0$ such that $L(\gamma_t)\geq \delta$ for all $t\in[0,T_{\text{max}})$.

\end{proposition}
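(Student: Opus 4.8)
The plan is to play off the two quantities controlled by Lemma~\ref{lemma:areapres}: the length $L(\gamma_t)$ is non-increasing, while the oriented area $A(\gamma_t+\sigma_t)$ is constant, equal to $A(\gamma_0+\sigma_0)=:A_0\neq 0$. Since $t\mapsto L(\gamma_t)$ is non-increasing and bounded below by $0$, it converges to some $L_\infty:=\lim_{t\to T_{\max}}L(\gamma_t)\geq 0$, and $L(\gamma_t)\geq L_\infty$ for every $t\in[0,T_{\max})$. It therefore suffices to show $L_\infty>0$, since then $\delta:=L_\infty$ satisfies the claim. I would argue by contradiction and assume $L_\infty=0$. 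The idea is to bound $|A(\gamma_t+\sigma_t)|$ from above by a quantity that tends to $0$ as $L(\gamma_t)\to 0$; as the area equals the fixed nonzero number $A_0$, this is contradictory.

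For the area bound I would use an elementary diameter-type estimate on the closed curve $\gamma_t+\sigma_t$. Parametrizing it by arclength as $c:[0,\ell_t]\to\Rzwei$ and translating its base point to the origin, the signed-area formula~\eqref{enclosedVolume} reads $A(\gamma_t+\sigma_t)=\tfrac12\int_0^{\ell_t}c(s)\times c'(s)\,ds$, whence $|A(\gamma_t+\sigma_t)|\leq\tfrac12\int_0^{\ell_t}|c(s)|\,ds\leq\tfrac14\,\ell_t^{\,2}$, using only $|c(s)|\leq s$; this holds for any, possibly self-intersecting, closed curve, with $\ell_t:=L(\gamma_t)+L(\sigma_t)$ the total length. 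It then remains to control the length $L(\sigma_t)$ of the appended short piece in terms of $L(\gamma_t)$. Since $\gamma_t$ joins its endpoints, the chord obeys $|\gamma_t(a)-\gamma_t(b)|\leq L(\gamma_t)$, so the two boundary points approach each other as $L(\gamma_t)\to 0$.

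The main obstacle is precisely this last step: showing that the short piece $\sigma_t$ shrinks together with its endpoints, i.e.\ $L(\sigma_t)\to 0$ as $|\gamma_t(a)-\gamma_t(b)|\to 0$. Here the defining property of the short piece enters. Because $L(\gamma_t)\leq L_0<d_\Sigma$, the endpoints are never antipodal, and by Lemma~\ref{constr} the curve $\sigma_t$ is the arc of $\Sigma$ along which the unit normal turns by less than $\pi$, i.e.\ $\int_{\sigma_t}\kappa_\Sigma\,ds<\pi$. As $\Sigma$ is a fixed smooth compact convex curve, this forces $\sigma_t$ to be the geometrically short arc between the two endpoints once they are close. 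I would establish $L(\sigma_t)\to 0$ either by a convex comparison (a convex arc of total turning less than $\pi$ has length at most $\tfrac{\pi}{2}$ times its chord) or, more robustly, by compactness: if there were $t_k$ with $L(\sigma_{t_k})\geq c>0$ while $|\gamma_{t_k}(a)-\gamma_{t_k}(b)|\to 0$, then along a subsequence both endpoints converge to a common point $z\in\Sigma$, the two arcs they subtend converge to $\{z\}$ and to all of $\Sigma$, and the piece of turning less than $\pi$ must eventually be the degenerating one, contradicting $L(\sigma_{t_k})\geq c$. Either way one obtains $L(\sigma_t)\leq\omega(L(\gamma_t))$ with $\omega(r)\to 0$ as $r\to 0$, so that $\ell_t\leq L(\gamma_t)+\omega(L(\gamma_t))$. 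Combining this with the area estimate gives $|A_0|\leq\tfrac14\bigl(L(\gamma_t)+\omega(L(\gamma_t))\bigr)^2$ for all $t$; letting $t\to T_{\max}$ under the assumption $L_\infty=0$ yields $|A_0|\leq 0$, contradicting $A_0\neq 0$. Hence $L_\infty>0$, and $\delta:=L_\infty$ proves the proposition.
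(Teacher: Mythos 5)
Your proposal is correct and follows essentially the same route as the paper: both argue by contradiction from $L(\gamma_t)\to 0$, use that the endpoints then converge so the ``short piece'' $\sigma_t$ also degenerates, and conclude that the conserved area $A(\gamma_t+\sigma_t)$ would have to tend to $0$, contradicting $A(\gamma_0+\sigma_0)\neq 0$. You merely make explicit two steps the paper leaves implicit (the quantitative bound $|A|\leq \tfrac{1}{4}\ell_t^2$ and the compactness argument for $L(\sigma_t)\to 0$), which is a faithful filling-in rather than a different proof.
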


\begin{proof}
 We assume that there is a sequence $t_j\to T_{\text{max}}$  with $L(\gamma_{t_j})\to 0$ $(j\to\infty)$. 
 Since $\Sigma$ is compact we get $x_0\in\Sigma$ and (after passing to a subsequence) $\gamma(a,t_j)\to x_0$, $\gamma(b,t_j)\to x_0$. This means that the curves $\gamma_{t_j}$ close up as $j\to\infty$. The boundary curves $\sigma(t_j)$ are the curves connecting the endpoints $\gamma_{t_j}(b)$ and $\gamma_{t_j}(a)$ along the part of $\Sigma$ where $\int_{\sigma_{t_j}}\kappa_\Sigma d s_\Sigma$ is smaller. This implies that $L(\sigma_{t_j})\to 0$ as $j\to\infty$. As a consequence, we also have that $A(\gamma_{t_j} + \sigma_{t_j})\to 0$ as $j\to\infty$.\\
 Due to the fact that $A(\gamma_0 + \sigma_0)=A(\gamma_{t_j} + \sigma_{t_j})$ for all $j\in\N$ we get a contradiction to our assumption.

\end{proof}

\begin{theorem}\label{convthm}
 Let $\gamma:\ab\times[0,\infty)\to\Rzwei$ be a solution of (\ref{1}) (without singularities in finite time) where the initial curve $\gamma_0:\ab\to\Rzwei$ satisfies  $L_0< d_\Sigma$ and
\begin{align*}
  A(\gamma_0 +\sigma_0)\neq 0.
\end{align*}
Here, $\gamma_0 + \sigma_0$ is the extension of $\gamma_0$ along the ``short piece'' of $\Sigma$ coming from Lemma~\ref{constr}. Choose $l\in \Z$ such that $(2l - 2)\pi<\int_{\gamma_0}\kappa d s<2l\pi$.\\
Then $\gamma_t$ $(t\to\infty)$ subconverges (after reparametrization) smoothly to a (possibly multicovered) arc of circle $\gamma_\infty$ sitting outside of $\Sigma$ at the endpoints. Note that the arc can be positively or negatively oriented. Each of the two contact angles at the endpoints of $\gamma_\infty$ is a $90$ degrees angle. Furthermore, the limit curve satisfies
\begin{align}
 \int\kappa d s_\infty&\in \left[(2l-1)\pi,2l\pi\right) \text{ if } l \geq 1, \label{posi}\\
 \int\kappa d s_\infty&\in \left((2l-2)\pi,(2l-1)\pi\right] \text{ if } l\leq 0. \label{nega}
\end{align}
\end{theorem}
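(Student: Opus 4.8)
The plan is to turn the qualitative hypotheses into a uniform bound on $\bar\kappa$, feed this into the convergence machinery of \cite{MeinPaper} to extract a circular-arc limit, and then determine the total curvature of that arc by a purely geometric argument.

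First I would bound $\bar\kappa(t)$ uniformly in $t$ (hence independently of $T_{max}=\infty$). Writing $\bar\kappa(t)=\int_{\gamma_t}\kappa\,ds/L(\gamma_t)$, Proposition~\ref{est:barkappa} bounds the numerator, $(2l-2)\pi<\int_{\gamma_t}\kappa\,ds<2l\pi$, using $L_0<d_\Sigma$. For the denominator, curve shortening (Lemma~\ref{lemma:areapres}) gives $L(\gamma_t)\leq L_0$, and the hypothesis $A(\gamma_0+\sigma_0)\neq 0$ lets Proposition~\ref{prop:Lgr0} supply a lower bound $L(\gamma_t)\geq\delta>0$. Together these yield $|\bar\kappa(t)|\leq \max\{|2l-2|,2|l|\}\pi/\delta$ for all $t\in[0,\infty)$.

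With $T_{max}=\infty$ and this uniform bound at hand, I would invoke the estimates of \cite{MeinPaper}: a bounded average curvature upgrades, through the interior and boundary parabolic estimates established there, to uniform bounds on $\kappa$ and all its arclength derivatives, so that after reparametrization a subsequence $\gamma_{t_j}$ converges smoothly to some curve $\gamma_\infty$. To see that $\gamma_\infty$ is a circular arc I would use that $L(\gamma_t)$ is monotone decreasing and bounded below, hence convergent; since the perpendicular Neumann conditions make the boundary terms in the first variation of length vanish, one has $\frac{d}{dt}L=-\int(\kappa-\bar\kappa)^2\,ds$, so $\int_0^\infty\!\int(\kappa-\bar\kappa)^2\,ds\,dt<\infty$ and $\int_{\gamma_{t_j}}(\kappa-\bar\kappa)^2\,ds\to 0$ along a subsequence. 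The smooth limit therefore satisfies $\kappa_\infty\equiv\mathrm{const}$, i.e.\ it is a (possibly multicovered) circular arc; the Neumann condition and the exterior position at the endpoints are preserved under smooth convergence, and the sign of $\int\kappa\,ds_\infty$ fixes the orientation.

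The main obstacle is the sharp range for $\int\kappa\,ds_\infty$, and this is where I expect the real work. Passing Proposition~\ref{est:barkappa} to the limit gives $\int\kappa\,ds_\infty\in[(2l-2)\pi,2l\pi]$, and since $L_\infty\leq L_0<d_\Sigma$ the limit endpoints remain non-antipodal, so $\tau(a)\neq\tau(b)$ and $\int\kappa\,ds_\infty\notin 2\pi\Z$; this removes the two even-multiple endpoints and leaves the open interval. To reach the claimed half I would argue by orientation. For $l\geq 1$ one has $\int\kappa\,ds_\infty>0$, hence $\kappa_\infty>0$ and $\gamma_\infty$ is convex; writing the total turning as $\int\kappa\,ds_\infty=2\pi k+\phi$ with $\phi\in(0,2\pi)$, the single convex arc obtained by discarding the $k$ full loops still runs from $\gamma_\infty(a)$ to $\gamma_\infty(b)$ with the same tangent directions there, hence satisfies the hypotheses of Proposition~\ref{wi}, which forces $\phi\geq\pi$. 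Since $\int\kappa\,ds_\infty\in((2l-2)\pi,2l\pi)$ pins $k=l-1$, this gives $\int\kappa\,ds_\infty\in[(2l-1)\pi,2l\pi)$. The case $l\leq 0$ follows by reversing the orientation of $\gamma_\infty$: this produces a convex arc satisfying the same boundary conditions with $l$ replaced by $1-l\geq 1$, and negating the resulting interval yields $\int\kappa\,ds_\infty\in((2l-2)\pi,(2l-1)\pi]$. The delicate points are verifying that Proposition~\ref{wi} genuinely applies to the peeled-off single arc and that the orientation reversal respects all the hypotheses.
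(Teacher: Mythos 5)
Your proposal is correct and follows essentially the same route as the paper: you bound $\bar\kappa$ via Propositions~\ref{est:barkappa} and \ref{prop:Lgr0}, invoke the convergence machinery of \cite{MeinPaper} to extract a smooth subsequential limit with $\int(\kappa-\bar\kappa)^2\,ds\to 0$ (which you rederive from length monotonicity rather than citing \cite[Corollary~7.5]{MeinPaper}, a harmless substitution yielding a sufficient subsequential statement), and then pin down $\int\kappa\,ds_\infty$ exactly as the paper does, by combining the limit of Proposition~\ref{est:barkappa}, the non-antipodality exclusion of $2\pi\Z$, Proposition~\ref{wi} applied to the final peeled-off arc, and orientation reversal for $l\leq 0$.
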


\begin{proof}
In \cite[Theorem~7.15]{MeinPaper}, subconvergence is proved under the conditions $L(\gamma_t)\geq c_1>0$ and $\bar\kappa(t)\in [\bar c, c_2]$ for all $t\in[0,\infty)$ for constants $c_1, \bar c, c_2>0$. But the proof in fact also works if we do not assume the lower bound $\bar\kappa\geq \bar c$. We only need $|\bar\kappa|\leq c_2$ and $L(\gamma_t)\geq c_1>0$. We sketch this proof for the convenience of the reader:\\
For any sequence $\tau_l\to\infty$ we reparametrize the original curves $\tilde\gamma(\cdot,\tau_l)$ by constant speed and get a solution $\gamma_l:[0,1]\times[0,\infty)\to\R^2$ of (\ref{1}) with $|\gamma_l'|=L(\tilde\gamma_{\tau_l})$ at the time $\tau_l$. Using Gagliardo-Nirenberg interpolation inequalities and integral estimates we proved in Corollary~7.14 from \cite{MeinPaper} a bound 
\begin{align*}
 \sup_{(p,t)\in[0,1]\times[1,\infty)}|\kappa_l(p,t)|\leq C,
\end{align*}
where $C$ does not depend on $l$. Using the graph representation of the curves, the lower bound on the length and the flow equation we get estimates $|\pt^i\ps^m\kappa|\leq c$ on $[0,1]\times [\tau_l,\tau_l + \delta]$ for any $\delta>0$. We split the derivatives $\pp\gamma_l$ into its tangential and normal part and use an induction argument together with the bound on $|\ps^m\kappa|$. This yields $|\pp^m\gamma_l|\leq c$ on $[0,1]\times[\tau_l,\tau_l+ \delta]$, where $c$ depends on $m,\Sigma,C,L_0$ and $\delta$. Choose $\tau_l\to\infty$ and $\delta>0$ such that $\bigcup_{l\in\N}[\tau_l,\tau_l + \delta)=[1,\infty)$ then we have proved
\begin{align*}
 |\pp^m\gamma_l|\leq c \text{ on }[0,1]\times[1,\infty).
\end{align*}
The proof of these estimates can be found in \cite[Proof of Proposition~4.7]{MeinPaper} or in \cite[Section~5.3]{MeineDiss}.\\
For any $t_l\to\infty$ we consider $\alpha_l:=\gamma_l(\cdot,t_l)$. Using the theorem of Arzela-Ascoli the curves subconverge to a smooth curve $\gamma_\infty:[0,1]\to\R^2$ in every $C^m$ on $[0,1]$, $m\in\N_0$. This implies 
\begin{align*}
 \lim_{l'\to\infty}\bar\kappa(t_{l'})= \lim_{l'\to\infty}\frac{\int_{\alpha_{l'}}\kappa ds}{\int_{\alpha_{l'}}  ds} =\bar\kappa(\gamma_\infty)\in [-c_2,c_2].
\end{align*}
As a consequence we get that
\begin{align*}
 \lim_{l'\to\infty}\int_{\alpha_{l'}}(\kappa-\bar\kappa(\gamma_\infty))^2 d s = \lim_{l'\to\infty}\int_{\gamma(\cdot,t_{l'})}(\kappa-\bar\kappa)^2 d s =0,
\end{align*}
where we used $\lim_{t\to\infty}\int_{\gamma_t}(\kappa-\bar\kappa)^2 ds = 0$, which was shown in Corollary~7.5 in~\cite{MeinPaper}.  Thus, the limit curve $\gamma_\infty$ satisfies $\kappa_\infty \equiv \bar\kappa(\gamma_\infty)\in[-c_2,c_2]$.
By compactness of $\Sigma$ and by continuity we get that the endpoints of $\gamma_\infty$ lie in $\Sigma$, the curve goes into the ``exterior'' domain and comes back from the ``exterior'' domain at the endpoints. Is is not possible that $\gamma_\infty$ is a part of a straight line by these geometric properties, which implies that $\bar\kappa(\gamma_\infty)\neq 0$.\\
So we get that the limit curve $\gamma_\infty$ is a (possibly partly multicovered) arc of a circle. By reversing the orientation we can assume that $\gamma_\infty$ is positively oriented, thus $\kappa_\infty\equiv\bar\kappa(\gamma_\infty)>0$. Proposition~\ref{est:barkappa} yields 
\begin{align*}
  \int\kappa d s_\infty\in \left[(2l-2)\pi,2l\pi\right].
\end{align*}
We showed in Proposition~\ref{wi} that for a strictly convex curve ``outside'' of $\Sigma$ at the endpoints we always have $\int\kappa ds \geq \pi$. Using this for the ``last'' open part of the arc $\gamma_\infty$ we get that $\int\kappa d s_\infty\in \left[(2l-1)\pi,2l\pi\right]$. The situation $\int\kappa d s_\infty =2\pi l$ is excluded by the geometric situation as well.\\
If the arc was negatively oriented, estimate (\ref{nega}) is obtained by using (\ref{posi}) for the limiting arc with reversed orientation. \\

It remains to mention that the bounds $L(\gamma_t)\geq c_1>0$ and $|\bar\kappa|\leq c_2$ are satisfied under the assumptions of the theorem. This follows from Proposition~\ref{prop:Lgr0} and Proposition~\ref{est:barkappa}.

\end{proof}

We restate our result about the existence of finite time singularities.

\begin{theorem}\label{singu}
 Let $\gamma_0:\ab\to\Rzwei$ be an initial curve with $L_0< d_\Sigma$. Choose the orientation of $\gamma_0$ such that $\int_{\gamma_0}\kappa ds >0$. Consider $l\in\N$ such that $\int_{\gamma_0}\kappa ds \in \left((2l-2)\pi,2l\pi\right)$. We further assume
 \begin{enumerate}
  \item either $A(\gamma_0 + \sigma_0)>0$ and $\frac{L_0^2}{A(\gamma_0 + \sigma_0)} \leq \pi \frac{(2l-1)^2}{l}$,
  \label{case11}
  \item or $A(\gamma_0 + \sigma_0)<0$. \label{case21}
 \end{enumerate}
 Here, $\gamma_0 + \sigma_0$ is the extension of $\gamma_0$ along the ``short piece'' of $\Sigma$ defined in Lemma~\ref{constr}. \\ 
 In both cases  the solution of (\ref{1}) develops a singularity in finite time, i.e.\ $T_{\text{max}}<\infty$. 

\end{theorem}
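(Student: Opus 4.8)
The plan is to argue by contradiction: suppose $T_{\max}=\infty$, so the flow exists for all time without singularities, and derive a contradiction with the hypotheses. In both cases $A(\gamma_0+\sigma_0)\neq 0$ (positive in case (\ref{case11}), negative in case (\ref{case21})) and $L_0<d_\Sigma$, so the assumptions of Theorem~\ref{convthm} are met. It provides a subsequence $t_j\to\infty$ along which $\gamma_{t_j}$ converges smoothly to a (possibly multicovered) circular arc $\gamma_\infty$ of radius $r>0$, meeting $\Sigma$ perpendicularly from outside at its endpoints. Write $\Theta\defi\int\kappa\,ds_\infty$ and $L_\infty\defi\lim_j L(\gamma_{t_j})$, so that $L_\infty=r\Theta$. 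Since the orientation is fixed with $\int_{\gamma_0}\kappa\,ds>0$, Proposition~\ref{est:barkappa} keeps $\int_{\gamma_t}\kappa\,ds\in((2l-2)\pi,2l\pi)$ with $l\geq 1$; hence the limit is positively oriented and, by~(\ref{posi}), $\Theta\in[(2l-1)\pi,2l\pi)$. By Lemma~\ref{lemma:areapres} the area is preserved and the length is nonincreasing, so $A(\gamma_\infty+\sigma_\infty)=A(\gamma_0+\sigma_0)$ and $L_\infty\leq L_0$; Proposition~\ref{prop:Lgr0} moreover gives $L_\infty\geq\delta>0$, whence $r\geq\delta/(2l\pi)>0$.

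The heart of the argument is an exact area decomposition of the limit configuration. Placing the center of the limit circle at the origin and inserting the chord $\overline{PQ}$ between the endpoints $P=\gamma_\infty(a)$ and $Q=\gamma_\infty(b)$, I would write
\begin{align*}
 A(\gamma_\infty+\sigma_\infty)=A_1+A_2,
\end{align*}
where $A_1$ is the signed area bounded by the arc $\gamma_\infty$ and the chord from $Q$ to $P$, and $A_2$ is the signed area bounded by the chord from $P$ to $Q$ and the boundary piece $\sigma_\infty$. Since the position vector sweeps total angle $\Theta$, a direct computation gives $A_1=\tfrac12 r^2(\Theta-\sin\Theta)$. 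For $A_2$ the convexity of $\Sigma$ is decisive: the chord $\overline{PQ}$ lies in $\overline{G_\Sigma}$ while $\sigma_\infty\subset\partial G_\Sigma$, so the region between them is a sliver contained in $G_\Sigma$, which (because $\gamma_\infty$ lies outside $G_\Sigma$, and tracking the induced orientation) contributes with sign $A_2\leq 0$. Hence $A(\gamma_\infty+\sigma_\infty)\leq\tfrac12 r^2(\Theta-\sin\Theta)$.

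With this bound the two cases close quickly. In case (\ref{case11}), using $A(\gamma_0+\sigma_0)=A(\gamma_\infty+\sigma_\infty)>0$ and $L_\infty=r\Theta\le L_0$, I get
\begin{align*}
 \frac{L_0^2}{A(\gamma_0+\sigma_0)}\geq\frac{L_\infty^2}{A(\gamma_\infty+\sigma_\infty)}\geq\frac{r^2\Theta^2}{\tfrac12 r^2(\Theta-\sin\Theta)}=\frac{2\Theta^2}{\Theta-\sin\Theta}.
\end{align*}
It then remains to verify the elementary inequality $2l\Theta^2>(2l-1)^2\pi(\Theta-\sin\Theta)$ for all $\Theta\in[(2l-1)\pi,2l\pi)$: the difference equals $(2l-1)^2\pi^2>0$ at $\Theta=(2l-1)\pi$ and has derivative $4l\Theta-(2l-1)^2\pi(1-\cos\Theta)\geq 4l(2l-1)\pi-2(2l-1)^2\pi=2(2l-1)\pi>0$ on the interval, so it stays positive. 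This yields $L_0^2/A(\gamma_0+\sigma_0)>\pi(2l-1)^2/l$, contradicting the hypothesis. In case (\ref{case21}) I use the same decomposition, now bounding the $G_\Sigma$-sliver $-A_2$ strictly below $A_1=\tfrac12 r^2(\Theta-\sin\Theta)\geq\tfrac12 r^2(2l-1)\pi$ (using $r\geq\delta/(2l\pi)$ and $|\overline{PQ}|\leq L_\infty<d_\Sigma$), so that the positively oriented limit arc encloses strictly positive area $A(\gamma_\infty+\sigma_\infty)>0$, contradicting $A(\gamma_0+\sigma_0)<0$.

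I expect the main obstacle to be exactly the sign and size control of the boundary term $A_2$: establishing $A_2\leq 0$ cleanly (case (\ref{case11})) and, more delicately, bounding the $G_\Sigma$-sliver strictly below $A_1$ to force $A(\gamma_\infty+\sigma_\infty)>0$ (case (\ref{case21})). Both rest on the convexity of $\Sigma$ together with $L_\infty\leq L_0<d_\Sigma$ (which keeps the chord shorter than the minimum width, so the short piece $\sigma_\infty$ is genuinely short) and on the uniform length lower bound of Proposition~\ref{prop:Lgr0}; carefully orienting the chord and $\sigma_\infty$ relative to $\gamma_\infty$ is where the care is required.
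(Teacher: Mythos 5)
Your proposal follows essentially the same route as the paper's proof: assume $T_{\max}=\infty$, invoke Theorem~\ref{convthm} to extract a positively oriented multicovered circular limit arc with $\int\kappa\,ds_\infty\in[(2l-1)\pi,2l\pi)$, rule out the case $A(\gamma_0+\sigma_0)<0$ by positivity of the limit area, and contradict the isoperimetric hypothesis in the case $A(\gamma_0+\sigma_0)>0$ using $L(\gamma_\infty)\le L_0$ and area preservation. The only differences are minor: where the paper uses the cruder bounds $L(\gamma_\infty)\ge(2l-1)\pi r_\infty$ and $A(\gamma_\infty+\sigma_\infty)<l\pi r_\infty^2$, you use the exact segment area $\tfrac12 r_\infty^2(\Theta-\sin\Theta)$ closed by a (correctly verified) monotonicity check of $2l\Theta^2-(2l-1)^2\pi(\Theta-\sin\Theta)$, and the sliver facts you flag as the main obstacle ($A_2\le0$ and $-A_2<A_1$) are exactly the paper's equally briefly justified assertion $0<\tilde A_\infty<\pi r_\infty^2$.
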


\begin{proof}
 Theorem~\ref{convthm} implies that $\gamma_t$ subconverges to an arc of a circle $\gamma_\infty$ sitting outside of $\Sigma$ at the endpoints. 
 Property $l>0$ implies (\ref{posi}), which is $\int\kappa d s_\infty\in \left[(2l-1)\pi,2l\pi\right)$. This also gives us the information that the arc $\gamma_\infty$ is positively oriented.\\
In particular, the enclosed area in the limit is positive, $A(\gamma_\infty + \sigma_\infty)>0$, which yields a contradiction in Case~\ref{case21}) because the flow is area preserving.\\
We consider Case~\ref{case11}): The quantities in the isoperimetric quotient satisfy
\begin{align}\label{gds}
 L(\gamma_\infty)& = 2(l-1)\pi r_\infty + \alpha_\infty r_\infty \geq (2l-1)\pi r_\infty\ \text{ for some }\alpha_\infty\in[\pi,2\pi)\text{ and }\\
A(\gamma_0 +\sigma_0) &= A(\gamma_\infty + \sigma_\infty)= (l-1) \pi r_\infty^2 + \tilde A_\infty < l\pi r_\infty^2, \label{bds}
\end{align}
 where $r_\infty$ is the radius of the arc $\gamma_\infty$ and $0<\tilde A_\infty< \pi r_\infty^2$ is the area of the domain inside one full circulation of $\gamma_\infty$ without the positive area of $G_\Sigma$. We compute
\begin{align} 
 \frac{L_0^2}{A(\gamma_0 + \sigma_0)}&\geq \frac{L(\gamma_{t_j})^2}{A(\gamma_0 +\sigma_0) } \to \frac{L(\gamma_\infty)^2}{A(\gamma_0 +\sigma_0) } \text{ as } t_j\to\infty. \label{gfs}
\end{align}
We use (\ref{gds}) and (\ref{bds}) and the fact that the enclosed area is preserved and get
\begin{align*}
 \frac{L_0^2}{A(\gamma_0 + \sigma_0)} > \frac{(2l-1)^2 \pi^2 r_\infty^2}{l \pi r_\infty^2}= \pi\frac{(2l-1)^2}{l},
\end{align*}
which contradicts our assumptions.

\end{proof}

\begin{remark}
 The result of the previous theorem can be improved by analyzing the geometric situation in the limit more carefully. Instead of using the estimate $\tilde A_\infty<\pi r_\infty^2$  we can prove $\tilde A_\infty< \pi r_\infty^2 (1-\frac{7}{20\pi})$. If $A(\gamma_0 + \sigma_0)>0$ we get that $\frac{L_0^2}{A(\gamma_0 +\sigma_0)}<\pi\frac{(2l-1)^2}{l-\frac{7}{20\pi}}$ implies a singularity in finite time. 
 This is again not sharp because we estimated some geometric constants.
\end{remark}

\begin{corollary} \label{typeII}
 Let $\gamma_0:\ab\to\Rzwei$ be an initial curve satisfying the conditions from Theorem~\ref{singu}. Then the finite time singularity is of type II in the sense that 
 \begin{align*}
 & \max_{p\in\ab}|\kappa|(p,t)\to\infty \ (t\to T_{\text{max}}) \text{  and }\\
 & \max_{p\in\ab}\left(|\kappa|^2(p,t)(T_{\text{max}}-t)\right) \text{ is unbounded}.
 \end{align*}
\end{corollary}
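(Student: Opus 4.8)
The plan is to establish the two defining properties of a type II singularity in turn. The first property, $\max_{p\in\ab}|\kappa|(p,t)\to\infty$ as $t\to T_{\text{max}}$, is immediate: Theorem~\ref{singu} gives $T_{\text{max}}<\infty$, and the continuation criterion recorded for the flow~(\ref{1}) states precisely that a finite maximal time forces $\max_{\ab}|\kappa|(\cdot,t)\to\infty$. All the work therefore lies in the second property, that $\max_{p\in\ab}\bigl(|\kappa|^2(p,t)(T_{\text{max}}-t)\bigr)$ is unbounded, which I would prove by contradiction. So assume the singularity is of type I, i.e.\ there is a constant $C$ with $\max_{p\in\ab}|\kappa|^2(p,t)\,(T_{\text{max}}-t)\le C$ for all $t\in[0,T_{\text{max}})$.

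The decisive structural input is that the driving term $\bar\kappa$ is uniformly bounded on $[0,T_{\text{max}})$: Proposition~\ref{est:barkappa} confines $\int_{\gamma_t}\kappa\,ds$ to $((2l-2)\pi,2l\pi)$, and Proposition~\ref{prop:Lgr0} (applicable because $A(\gamma_0+\sigma_0)\neq0$ in both cases of Theorem~\ref{singu}) gives $L(\gamma_t)\ge\delta>0$; hence $|\bar\kappa(t)|\le 2l\pi/\delta=:C_0$. I would then carry out the standard type I parabolic rescaling. Choose $t_j\uparrow T_{\text{max}}$, set $\lambda_j:=1/\max_{\ab}|\kappa|(\cdot,t_j)\to0$, pick base points $p_j$ realizing this maximum, and define $\gamma_j(\cdot,s):=\lambda_j^{-1}\bigl(\gamma(\cdot,t_j+\lambda_j^2 s)-\gamma(p_j,t_j)\bigr)$. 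A direct computation shows that $\gamma_j$ solves $\partial_s\gamma_j=(\kappa_j-\lambda_j\bar\kappa)\nu_j$ with $|\kappa_j|\le 1$ and $|\kappa_j(p_j,0)|=1$; the type I bound controls the rescaled time, and the higher order bounds follow from the interior parabolic estimates already employed in the proof of Theorem~\ref{convthm} (compare \cite[Proof of Proposition~4.7]{MeinPaper}). Since $\lambda_j\bar\kappa\to0$ by the bound $|\bar\kappa|\le C_0$, the constraint term disappears in the limit, so $\gamma_j$ subconverges to a non-flat, ancient solution $\gamma_\infty$ of the pure curve shortening flow; an (almost) monotonicity argument, in which the bounded forcing $\bar\kappa$ is negligible on the shrinking parabolic scale, identifies $\gamma_\infty$ as a homothetically shrinking self-shrinker of positive Gaussian density.

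It then remains to exclude such a limit, and here one separates two cases according to whether the base points $\gamma(p_j,t_j)$ stay at a definite distance from $\Sigma$ or approach it. In the interior case $\gamma_\infty$ is a complete non-flat self-shrinker in $\R^2$, which by the Abresch--Langer classification is a (possibly multiply covered) circle or an Abresch--Langer curve; in particular a bounded loop of the original curve collapses self-similarly at $T_{\text{max}}$. In the boundary case the rescaled support curves $\lambda_j^{-1}(\Sigma-\gamma(p_j,t_j))$ flatten to a straight line $\ell$, and, using that the orthogonality in~(\ref{1}) passes to the limit, $\gamma_\infty$ is a self-shrinker meeting $\ell$ perpendicularly, i.e.\ half of a compact self-shrinker after reflection. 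In either case a bounded piece carries a fixed amount of enclosed area to $0$ along the self-similar collapse, and I would play this against the two preserved quantities of the flow, namely $A(\gamma_t+\sigma_t)\equiv A(\gamma_0+\sigma_0)\neq0$ (Lemma~\ref{lemma:areapres}) and $L(\gamma_t)\ge\delta>0$ (Proposition~\ref{prop:Lgr0}), to reach the desired contradiction.

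I expect the last step to be the main obstacle, on two counts. First, one must secure the smooth subconvergence of the rescaled flows uniformly up to the free boundary: that $\Sigma$ genuinely flattens to $\ell$ and that the Neumann condition survives passage to the limit. This is exactly where the results of \cite{MeinPaper} have to be ``refined'' from the convergence setting of Theorem~\ref{convthm} to the blow-up setting. Second, and more delicately, the qualitative statement ``a piece shrinks self-similarly while both the area and a positive length are preserved'' must be turned into a quantitative contradiction, which is most cleanly organised through the density/monotonicity estimate above. For convex $\gamma_0$ this obstacle is avoided entirely: one performs Hamilton's type II rescaling directly and identifies the limit flow as a grim reaper or half a grim reaper (Corollary~\ref{corconvex}), which is a translating rather than a shrinking soliton and is therefore incompatible with type I.
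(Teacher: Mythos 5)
Your setup coincides with the paper's: assume type I, note that Propositions~\ref{est:barkappa} and~\ref{prop:Lgr0} give $|\bar\kappa|\leq c_2$ and $L(\gamma_t)\geq\delta>0$, rescale parabolically at points of maximal curvature so that the constraint term is scaled away, and use the (free boundary) monotonicity formula to identify the limit as a proper, self-similarly shrinking ancient solution of the curve shortening flow, to be compared with the classification of planar self-shrinkers. Up to this point you are reproducing the paper's argument, which is \cite[Theorem~4.16]{MeinPaper} extended from convex to general initial curves.

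The gap is in the exclusion step. Your proposed contradiction --- ``a bounded piece carries a fixed amount of enclosed area to $0$ along the self-similar collapse,'' played against $A(\gamma_t+\sigma_t)\equiv A(\gamma_0+\sigma_0)\neq 0$ and $L(\gamma_t)\geq\delta$ --- does not work: if the blow-up limit were a shrinking circle or an Abresch--Langer curve, the collapsing piece of $\gamma_t$ would have diameter $O(\sqrt{T_{\text{max}}-t})$, hence enclosed area $O(T_{\text{max}}-t)\to 0$ and length $O(\sqrt{T_{\text{max}}-t})\to 0$; neither conservation law is violated by such a purely local collapse. The paper's actual mechanism is different: since $L(\gamma_t)\geq\delta$, the rescaled curves have length $Q_jL(\gamma_{t_j})\geq Q_j\delta\to\infty$, so each time slice of the limit flow has infinite length; together with properness this eliminates the shrinking circle, the Abresch--Langer curves and the curves dense in an annulus, leaving only the flat line. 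The line is in turn excluded because the normalization $|\kappa_{\tilde\gamma_j}(p_j,\tau_j)|=1$ must survive in the limit, and for that the rescaled normalization times $\tau_j=-Q_j^2(T_{\text{max}}-t_j)$ have to stay in a compact subset of $(-\infty,0)$: the bound $\tau_j\geq -c$ comes from the type I hypothesis, but the bound $\tau_j\leq-\tfrac14$ is exactly the blow-up rate of Lemma~\ref{blowuprate}, the one piece of new analysis the paper proves for this corollary and which your proposal never invokes (your phrase ``the type I bound controls the rescaled time'' only gives one of the two inequalities). Finally, your closing remark that for convex $\gamma_0$ one may ``perform Hamilton's type II rescaling directly'' and read off type II from the grim reaper limit is circular: Corollary~\ref{corHam} presupposes that the singularity is of type II in order to set up that rescaling.
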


The proof of this corollary is based on the following lemma:

\begin{lemma}\label{blowuprate}
 Let $\gamma:[a,b]\times[0,T)\to\R^2$ be a solution of (\ref{1}) with $T<\infty$ is a time such that $\{\max_{[a,b]}\kappa^2(\cdot,t): t\in[0,T)\}$ is unbounded. Then we have that
 \begin{align*}
  \kappa_{max}^2(t)\defi\max_{[a,b]}\kappa^2(\cdot,t)\geq \frac{1}{4(T-t)} \ \ \forall t\in(0,T).
 \end{align*}
\end{lemma}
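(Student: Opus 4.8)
The plan is to turn the statement into a scalar differential inequality for the Lipschitz function $\kappa_{max}^2(t)=\max_{\ab}\kappa^2(\cdot,t)$ of the form $\frac{d}{dt}\kappa_{max}^2\le 4\kappa_{max}^4$ (valid once the maximum curvature is large, which happens near $T$ since the flow blows up there), and then to integrate it by comparison with the ODE $\psi'=4\psi^2$, whose maximal solution blowing up at time $T$ is exactly $\frac{1}{4(T-t)}$. First I would recall the evolution of the curvature under a normal flow $\partial_t\gamma=V\nu$, namely $\partial_t\kappa=\partial_s^2 V+\kappa^2 V$. With $V=\kappa-\bar\kappa$ and $\bar\kappa=\bar\kappa(t)$ constant in space this gives
\begin{align*}
 \partial_t\kappa=\partial_s^2\kappa+\kappa^3-\bar\kappa\kappa^2,
\end{align*}
and hence, using $2\kappa\,\partial_s^2\kappa=\partial_s^2(\kappa^2)-2(\partial_s\kappa)^2$,
\begin{align*}
 \partial_t(\kappa^2)=\partial_s^2(\kappa^2)-2(\partial_s\kappa)^2+2\kappa^4-2\bar\kappa\kappa^3.
\end{align*}

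\textbf{Reduction to an ODE via the maximum principle.} Next I would apply Hamilton's trick: at almost every $t$ the upper Dini derivative of $\kappa_{max}^2$ equals $\partial_t(\kappa^2)$ evaluated at a point $p_t\in\ab$ realizing the spatial maximum. If $p_t$ is interior, then $\partial_s(\kappa^2)=0$ and $\partial_s^2(\kappa^2)\le 0$, so dropping the favorable terms,
\begin{align*}
 \frac{d}{dt}\kappa_{max}^2\le 2\kappa^4-2\bar\kappa\kappa^3\le 2\kappa_{max}^4+2|\bar\kappa|\,\kappa_{max}^3.
\end{align*}
In our setting $|\bar\kappa(t)|\le c_2$ is bounded independently of $T_{max}$ (Propositions~\ref{est:barkappa} and~\ref{prop:Lgr0}), and $\kappa_{max}(t)\to\infty$ as $t\to T$; therefore on a final interval where $\kappa_{max}(t)\ge c_2$ the subleading term is absorbed, $2|\bar\kappa|\kappa_{max}^3\le 2\kappa_{max}^4$, yielding $\frac{d}{dt}\kappa_{max}^2\le 4\kappa_{max}^4$. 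This is the point of the constant $4$ rather than the value $2$ familiar from the pure curve shortening flow: the extra slack accommodates the nonlocal term $\bar\kappa$.

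\textbf{Main obstacle: the free boundary.} The genuinely new difficulty compared with the closed case is that the maximum of $\kappa^2$ may be attained at an endpoint $p_t\in\{a,b\}$, where the interior second–derivative test is unavailable. Here one must invoke the Neumann free boundary conditions: differentiating the perpendicularity relations $\tau(a,t)=-\nu_\Sigma(\gamma(a,t))$, $\tau(b,t)=\nu_\Sigma(\gamma(b,t))$ along the flow (as in Stahl's computation, reproduced in \cite{MeinPaper}) produces a boundary identity expressing $\partial_s\kappa$ at the endpoints in terms of $\kappa_\Sigma$ and $\kappa$. Since $\kappa_\Sigma$ and $\bar\kappa$ are bounded, the resulting boundary contribution to $\frac{d}{dt}$ of the endpoint value of $\kappa^2$ is of order at most $\kappa_{max}^3$; one then shows either that this boundary term has the right sign or that it is absorbed exactly as the $\bar\kappa\kappa^3$ term above. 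Getting this sign/absorption right is the part that requires care, and it is where the free–boundary character of the problem enters.

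\textbf{Conclusion.} Finally I would integrate the differential inequality $\frac{d}{dt}\kappa_{max}^2\le 4\kappa_{max}^4$ by comparison. Writing $u:=1/\kappa_{max}^2$, the inequality reads $u'\ge -4$, and the blow-up gives $u(t)\to 0$ as $t\to T$. Comparison with the ODE $\psi'=4\psi^2$ (equivalently, integrating $u'\ge-4$ backward from $T$) yields $u(t)\le 4(T-t)$, that is,
\begin{align*}
 \kappa_{max}^2(t)\ge\frac{1}{4(T-t)}\qquad\text{for all }t\in(0,T),
\end{align*}
which is the claim.
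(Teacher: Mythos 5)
Your overall architecture (evolution equation for $\kappa^2$, Hamilton's trick, a boundary analysis via the differentiated Neumann condition, integration of the resulting ODE) matches the paper's, but two steps have genuine gaps. First, the absorption of the nonlocal term: you bound $2|\bar\kappa|\kappa^3$ using $|\bar\kappa|\le c_2$ and then restrict to ``a final interval where $\kappa_{max}\ge c_2$''. This is problematic for two reasons. The uniform bound $|\bar\kappa|\le c_2$ is \emph{not} among the hypotheses of the lemma --- it comes from Propositions~\ref{est:barkappa} and \ref{prop:Lgr0}, which require $L_0<d_\Sigma$ and $A(\gamma_0+\sigma_0)\neq 0$, whereas the lemma is stated for an arbitrary solution of (\ref{1}) with unbounded curvature. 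Moreover, even granting that bound, your differential inequality then holds only where $\kappa_{max}\ge c_2$, so integrating it cannot yield the claimed estimate for \emph{all} $t\in(0,T)$ (and the set $\{\kappa_{max}\ge c_2\}$ need not even be a final interval, since $\kappa_{max}$ is not monotone). The paper sidesteps both issues with the elementary, unconditional observation that the average is dominated by the maximum, $|\bar\kappa(t)|\le\max_{\ab}|\kappa|(\cdot,t)$, which gives $2|\bar\kappa|\,|\kappa|^3\le 2\bigl(\max_{\ab}|\kappa|\bigr)^4$ at every time and hence $\frac{d}{dt}\kappa^2_{max}\le \ps^2\kappa^2(p,t)+4(\kappa^2_{max})^2$ on all of $(0,T)$; this is also exactly where the constant $4$ comes from.

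Second, the boundary case is not actually resolved in your argument. The term you must control at an endpoint maximum is $\ps^2\kappa^2(a,t)$, a second derivative, and this is not ``a boundary contribution of order at most $\kappa_{max}^3$'' obtainable from the first-order identity $\ps\kappa(a,t)=(\kappa(a,t)-\bar\kappa(t))\,\kappa_\Sigma(\gamma(a,t))$; no absorption argument applies to it. The paper instead shows it is nonpositive by a sign argument: if, say, $\kappa(a,t)>0$ and $a$ maximizes $\kappa^2$, then $\kappa(a,t)=\max\kappa\ge\bar\kappa$ and $\kappa_\Sigma\ge 0$ force $\ps\kappa(a,t)\ge 0$ from the boundary identity, while the maximum condition forces $\ps\kappa(a,t)\le 0$; hence $\ps\kappa(a,t)=0$, so $\ps\kappa^2(a,t)=0$, and then $\ps^2\kappa^2(a,t)>0$ would make $a$ a strict local minimum of $\kappa^2(\cdot,t)$, a contradiction. (The cases $\kappa(a,t)<0$ and $\kappa(a,t)=0$, and the endpoint $b$ with its sign-reversed identity, are handled analogously.) Note that this step uses the convexity of $\Sigma$, i.e.\ $\kappa_\Sigma\ge 0$ --- it is not a matter of order-of-magnitude absorption. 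With these two repairs your integration step (using only a sequence $t_j\to T$ with $\kappa^2_{max}(t_j)\to\infty$, rather than full convergence $u(t)\to 0$) goes through as in the paper.
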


\begin{proof}
 A bound $ \max_{[a,b]}\kappa^2(\cdot,t) \geq \frac{1}{2(T-t)}$ was proved in \cite[Proposition~4.1]{MeinPaper} for a convex initial curve. We refine this proof for a general initial curve: We compute the evolution equation of $\kappa^2$ and estimate
\begin{align*}
 \pt \kappa^2 &= \ps^2 \kappa^2 - 2(\ps\kappa)^2 + 2\kappa^4 - 2 \kappa^3\bar\kappa\\
  &\leq \ps^2 \kappa^2 + 2\kappa^4 + 2\left(\max_{[a,b]}|\kappa|(\cdot,t)\right)^3|\bar\kappa|\\
  &\leq \ps^2 \kappa^2 + 4\left(\max_{[a,b]}|\kappa|(\cdot,t)\right)^4,
\end{align*}
where we used $-\max_{[a,b]}|\kappa|\leq\bar\kappa\leq\max_{[a,b]} |\kappa|$ in the last step. As $\kappa^2$ is $C^2$ the function $t\mapsto \kappa^2_{max}(t)$ is Lipschitz and hence differentiable almost everywhere. At a point of differentiability we can compute the time derivative as $\frac{d}{dt} \kappa^2_{max}(t) = \frac{\partial \kappa^2(p,t)}{\partial t}$, where $p\in[a,b]$ is a point where the maximum is attained. This approach is sometimes called ``Hamilton's trick''. It goes back to \cite{Hamiltontrick}. We get that 
\begin{align}\label{ungl2}
 \frac{d}{dt} \kappa^2_{max}(t) \leq \ps^2\kappa^2(p,t) + 4 \left(\kappa^2_{max}(t)\right)^2,
\end{align}
where $p\in[a,b]$ is a point where the maximum of $\kappa^2(\cdot,t)$ is attained.  We now prove that 
\begin{align}\label{psleq0}
 \ps^2\kappa^2(p,t)\leq0
\end{align}
holds for such a point $p\in[a,b]$. If $p\in(a,b)$, we simply have a maximum in the inner part of $[a,b]$. Thus, inequality (\ref{psleq0}) is clear. So we assume that $p=a$. \\
Case~$i)$: $\kappa(a,t)>0$: Then $\kappa(a,t)=\max_{[a,b]}\kappa(\cdot,t)$. So we have the inequality $\ps\kappa(a,t)\leq 0$. In \cite[Lemma~2.12]{MeinPaper} we proved by differentiating the boundary conditions that $\ps\kappa(a,t) = \left(\kappa(a,t)-\bar\kappa(t)\right) \kappa_\Sigma( \gamma(a,t))$ for all $t\in(0,T)$. In our specific situation we get that
\begin{align*}
 0 \geq \ps\kappa(a,t)= \left(\kappa(a,t)-\bar\kappa(t)\right) \kappa_\Sigma( \gamma(a,t)) \geq 0,
\end{align*}
where we used $\bar\kappa(t)\leq\max_{[a,b]}\kappa(\cdot,t) =  \kappa(a,t) $ in the last inequality. We hence get that $\ps\kappa (a,t)=0$ and therefore $\ps \kappa^2(a,t) =2\kappa(a,t)\ps\kappa(a,t)=0$. A positive sign of the second derivative $\ps^2\kappa^2(a,t)>0$ would now imply a strict local minimum of $\kappa^2(\cdot,t)$ in $a$, which is a contradiction. As a consequence we get that (\ref{psleq0}) is satisfied.\\
Case~$ii)$: $\kappa(a,t)<0$: In this case we know that $\kappa(a,t)=\min_{[a,b]}\kappa(\cdot,t)$. So we get that $\ps\kappa(a,t)\geq 0$ and
\begin{align*}
 0 \leq \ps\kappa(a,t)= \left(\kappa(a,t)-\bar\kappa(t)\right) \kappa_\Sigma( \gamma(a,t)) \leq 0
\end{align*}
because of $\bar\kappa(t)\geq \min_{[a,b]}\kappa(\cdot,t)= \kappa(a,t) $. Thus, we also have $\ps\kappa^2(a,t)=0$. As in the first case, we get that $\ps^2\kappa^2(a,t)\leq 0$.\\
Case~$iii)$: $\kappa(a,t)=0$: Here, we immediately get that $\ps\kappa^2(a,t)= 2\kappa(a,t)\ps\kappa(a,t)=0$. As in the other two cases, this implies $\ps^2\kappa^2(a,t)\leq 0$ because $a$ is a maximum point of $\kappa^2(\cdot,t)$.\\
If $p=b$, (\ref{psleq0}) follows analogously as $\ps\kappa(b,t)= -  \left(\kappa(b,t)-\bar\kappa(t)\right) \kappa_\Sigma( \gamma(b,t)) $ \cite[Lemma~2.12]{MeinPaper}.\\

We now use (\ref{ungl2}) and (\ref{psleq0}) and get
\begin{align*}
 -\frac{d}{dt} \left(\frac{1}{\kappa^2_{max}(t)}\right)\leq 4
\end{align*}
at all times $t\in(0,T)$ where $\kappa^2_{max}$ is differentiable. Integrating and using the existence of a sequence $t_j\to T$ such that $\kappa_{max}^2(t_j)\to\infty$ yields the result.
\end{proof}

\begin{definition}
 We keep the notation of a type I singularity as in the (classical) curve shortening flow: A singular time $T<\infty$ is \emph{of type I} if there is a constant $c>0$ such that
 \begin{align*}
  \max_{[a,b]}\kappa^2(\cdot,t)\leq \frac{c}{T-t} \  \ \ \forall t\in[0,T).
 \end{align*}
Otherwise, the singularity is of type II.
\end{definition}

\begin{proof}
 (of Corollary~\ref{typeII}).
 In \cite[Theorem~4.16]{MeinPaper}, the author proved that a \emph{convex} initial curve cannot develop a type I singularity in finite time if $|\bar\kappa|\leq c_2$ and $L(\gamma_t)\geq c_1>0$. We are able to generalize this result for general initial curves under the same bounds on the total curvature and on the length.\\
Almost all steps of the proof of Theorem~4.16 in \cite{MeinPaper} are already formulated for the general case, see Section~4 in \cite{MeinPaper}. We sketch the most important steps: Assume that the flow develops a singularity of type I in finite time. We do a parabolic rescaling
\begin{align*}
 \tilde\gamma_j(p,\tau)\defi Q_j \left(\gamma(p,\tfrac{\tau}{Q_j^2} + T) - x_0\right) \  \ \text{ for } (p,\tau)\in [a,b]\times [-Q_j^2T,0),
\end{align*}
 where $x_0\in\R^2$ is a ``blowup point'' of the flow, which means $t_j\to T$, $p_j\to p_0\in[a,b]$, $Q_j= |\kappa|(p_j,t_j) =\max_{[a,b]}|\kappa|(p,t_j)|\to\infty$, $\gamma(p_j,t_j)\to x_0$. Using the gradient estimates from Stahl \cite{StahlPaper1, StahlPaper2} we adapted the convergence procedure from \cite[Remark~4.22~(2)]{EckerBuch} to the area preserving flow. This is similar to the procedure in Theorem~\ref{convthm} (but it is not necessary to use integral estimates because $T<\infty$). We get smooth subconvergence (after reparametrization) to a limit flow $\gamma_\infty:I\times(-\infty,0)\to\R^2$, where $I$ is an interval containing $0$. \\
 Because of the $L^\infty$ bound on $\bar\kappa(t)$ the term $\bar\kappa_j(t)$ is scaled away in the limit. Thus, the limit flow satisfies $\pt \gamma_\infty=\kappa_\infty \nu_\infty$, it is an ancient solution of the curve shortening flow. 
 The lower bound on the length implies that each curve $\gamma_\infty(\cdot,t)$ has infinite length. If the singularity develops at the boundary then the curve $\gamma_\infty(\cdot,t)$ meets a straight line perpendicularly at the endpoint. We reflect it and can consider a complete, unbounded solution of the curve shortening flow. A monotonicity formula for the free boundary situation yields the key properties of the limit flow: Each curve $\gamma_\infty(\cdot,t)$ is proper and $\gamma_\infty$ is self-similarly shrinking, i.e.\ $\kappa_\infty(p,\tau)= \frac{\langle \gamma_\infty(p,\tau),\nu_\infty(p,\tau)\rangle}{2 \tau}$. \\
 For plane curves, all the self-similarly shrinking solutions are classified. It turns out that the curvature of these solutions does not change sign, see \cite{Halldorsson}. We get that $\gamma_\infty$ is one of the following:
 \begin{enumerate}
  \item The line $\R\times\{0\}$, \label{loe1}
  \item the shrinking spere $\mathbb S^1_{\sqrt{-2\tau}}$, where the curves can also be negatively oriented,\label{loe2}
  \item one of the closed ``Abresch-Langer curves'' \cite{AbreschLanger}, positively or negatively oriented,  \label{loe3}
  \item a curves whose image is dense in an annulus of $\R^2$.  \label{loe4}
 \end{enumerate}
The solutions \ref{loe2}), \ref{loe3}) and \ref{loe4}) are excluded because of the unbounded length and the properness of the curves. It remains to exclude \ref{loe1}): We rescaled at points of maximal curvature which implies for $\tau_j\defi -Q_j^2(T-t_j)$
\begin{align*}
 \kappa_{\tilde\gamma_j}(p_j,\tau_j)= \frac{1}{Q_j}\kappa\left(p_j,\tfrac{-Q_j^2(T-t_j)}{Q_j^2} + T\right) = \frac{1}{Q_j}\kappa(p_j,t_j)=1 \ \ \forall j\in\N.
\end{align*}
We reparametrize in the spatial component such that $\tilde\kappa_j(0,\tau_j)=1$ for all $j\in\N$. 
By the type I property we get that $$\tau_j = -\kappa^2(p_j,t_j)(T-t_j) \geq - \frac{c}{T-t_j} (T-t_j)=  - c>-\infty.$$ The blowup rate from Lemma~\ref{blowuprate} yields $$\tau_j = -\kappa^2(p_j,t_j)(T-t_j) \leq - \frac{1}{4}<0.$$
Thus, there is a time $\tau\in[-c,-\frac{1}{4}]$ such that $\kappa_\infty(0,\tau)=1$. This excludes the line as a limit flow.
\end{proof}

\begin{corollary} \label{corHam}
 Let $\gamma_0:\ab\to\R^2$ be a convex initial curve satisfying the conditions from Theorem~\ref{singu}. Then the ``Hamilton blow-up'' at $T_{max}$ yields either a grim reaper (we call this situation an ``inner singularity'') or half a grim reaper at a plane (a ``boundary singularity'').
\end{corollary}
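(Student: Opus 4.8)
The plan is to combine the finite-time, type~II character of the singularity, established in Theorem~\ref{singu} and Corollary~\ref{typeII}, with Hamilton's parabolic rescaling procedure for type~II singularities from \cite{Hamilton2}, using convexity in an essential way. Since the singularity is of type~II, the quantity $\max_{[a,b]}\kappa^2(\cdot,t)(T_{max}-t)$ is unbounded, so the rescaling at points of maximal curvature used in the proof of Corollary~\ref{typeII} no longer produces a non-constant limit. Instead, for each $j$ one would choose $(p_j,t_j)\in[a,b]\times[0,T_{max}-\tfrac1j]$ almost maximizing the scaling-invariant quantity $\kappa^2(p,t)(T_{max}-\tfrac1j-t)$, set $Q_j\defi\kappa(p_j,t_j)\to\infty$, and rescale $\tilde\gamma_j(p,\tau)\defi Q_j\bigl(\gamma(p,t_j+\tfrac{\tau}{Q_j^2})-\gamma(p_j,t_j)\bigr)$. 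The type~II property forces $Q_j^2(T_{max}-t_j)\to\infty$, while $Q_j^2\,t_j\to\infty$ as well, so the admissible time interval for $\tilde\gamma_j$ exhausts all of $(-\infty,\infty)$; the limit flow is therefore \emph{eternal} rather than merely ancient.

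Next I would pass to the limit exactly as in the proofs of Theorem~\ref{convthm} and Corollary~\ref{typeII}. Because $|\bar\kappa|$ is bounded independently of $T_{max}$, by Proposition~\ref{est:barkappa} together with Proposition~\ref{prop:Lgr0}, the constraint term scales away and each $\tilde\gamma_j$ converges to a solution of the \emph{pure} curve shortening flow. The almost-maximizing choice yields uniform curvature bounds $\kappa_{\tilde\gamma_j}^2\le 1+o(1)$ on compact time intervals with $\kappa_{\tilde\gamma_j}(p_j,0)=1$, so that Stahl's boundary gradient estimates \cite{StahlPaper1,StahlPaper2} together with Arzela--Ascoli give smooth subconvergence to an eternal limit flow $\gamma_\infty$. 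Convexity is preserved along the flow by Lemma~\ref{lemma:areapres}, the lower length bound from Proposition~\ref{prop:Lgr0} forces each $\gamma_\infty(\cdot,\tau)$ to be complete and unbounded, and the normalization gives $\kappa_\infty\le 1$ everywhere with $\kappa_\infty=1$ attained at the spacetime origin.

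Finally I would invoke Hamilton's differential Harnack estimate for convex curve shortening flow: an eternal, convex, complete solution whose curvature attains its spatial--temporal maximum at an interior point must be a translating soliton, i.e.\ the grim reaper. The dichotomy in the statement then comes from the location of the blow-up point. If $Q_j\cdot\operatorname{dist}\bigl(\gamma(p_j,t_j),\{\gamma(a,t_j),\gamma(b,t_j)\}\bigr)\to\infty$, the boundary recedes to infinity and the limit is a full grim reaper without boundary, the ``inner singularity''. Otherwise the blow-up point converges to an endpoint; there the rescaled support curves $Q_j(\Sigma-\gamma(p_j,t_j))$ have curvature $\kappa_\Sigma/Q_j\to 0$ and hence flatten to a straight line, while the perpendicularity condition in (\ref{1}) is scale invariant and passes to the limit, so $\gamma_\infty$ is half a grim reaper meeting a line orthogonally, the ``boundary singularity''. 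I expect the main obstacle to be the boundary analysis: justifying the eternal limit and the Harnack-based classification in the presence of the free boundary, which requires the reflection construction and the free-boundary monotonicity formula, precisely the convex blow-up analysis carried out in detail in \cite{MeinPaper}.
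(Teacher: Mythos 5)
Your proposal follows essentially the same route as the paper: the Hamilton blow-up at almost-maximizers of $\kappa^2(p,t)(T_{max}-\tfrac1j-t)$, the observation that the type~II property makes the limit an \emph{eternal} solution of the pure curve shortening flow (the constraint term $\bar\kappa$ being scaled away thanks to the uniform bound from Propositions~\ref{est:barkappa} and~\ref{prop:Lgr0}), reflection across the limiting line in the boundary case, and Hamilton's classification of convex eternal solutions attaining their maximal curvature as translating solitons, yielding the grim reaper or half a grim reaper. The argument is correct and matches the paper's proof of Corollary~\ref{corHam}.
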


\begin{proof}
 The situation of a finite type II singularity was treated in \cite[Section~6]{MeinPaper}. We repeat the important steps for the sake of completeness. We recall the ``Hamilton blow-up'' \cite{Hamilton2}: Define $T\defi T_{max}$. 
 For $j\in\mathbb N$ choose $t_j\in [0,T-\frac{1}{j}]$ and $p_j\in[a,b]$ such that
\begin{align*}
 |\kappa|^2(p_j,t_j) \left(T-\tfrac{1}{j} - t_j\right) = \max\left\{\left(|\kappa|^2(p,t) \left(T-\tfrac{1}{j} - t\right)\right) : t\in[0,T-\tfrac{1}{j}], p\in[a,b]\right\}.
\end{align*}
Then define $Q_j\defi |\kappa|(p_j,t_j)$ and 
\begin{align*}
\tilde\gamma_j(\cdot,\tau)\defi Q_j\left(\gamma(\cdot,\tfrac{\tau}{Q_j^2} + t_j) - \gamma(p_j,t_j)\right) \mbox{ for } \tau\in[-Q_j^2t_j,Q_j^2(T-t_j-\tfrac{1}{j})] \mbox{ on }[a,b].
\end{align*} 
 As the singularity is of type II, one can show certain properties of the rescaled flow. The most important ones are $\tilde\kappa_j(p_j,0)=0 \  \forall j$, $|\tilde\kappa_j| (\cdot,\tau)\leq 1 \ \forall j$ and 
 \begin{align*}
   \forall \epsilon>0 \ \forall \bar \tau>0 \ \exists j_0(\epsilon,\bar \tau)\in\mathbb N,\  \forall j\geq j_0: 
 & |\tilde\kappa_j|^2(p,\tau) \leq 1 + \epsilon \\
 &\forall \tau \in [-Q_{j_0}^2t_{j_0},\bar \tau], \forall p\in[a,b].
 \end{align*}
 Then there exist reparametrizations $\psi_j: I_j \to [a,b]$ with $|I_j|\to\infty$ ($j\to\infty$) such that a subsequence of the rescaled curves
\begin{align*}
 \gamma_j\defi\tilde\gamma_j(\psi_j,\cdot): I_j\times [-Q_j^2t_j,Q_j^2(T-t_j-\tfrac{1}{j})]\to\Rzwei
\end{align*}
converges locally smoothly to a limit flow $\tilde\gamma_\infty: \tilde I \times (-\infty,\infty)\to \Rzwei$ (where $\tilde I$ is an unbounded interval containing $0$). The proof of this subconvergence can be found in \cite[Proposition~6.2, Proposition~4.7]{MeinPaper}. It is again similar to the proofs of Theorem~\ref{convthm} and Corollary~\ref{typeII}.\\
The limit flow $\tilde\gamma_\infty$ is a smooth solution of the curve shortening flow and satisfies $0<\tilde\kappa_\infty\leq 1$ everywhere and $\tilde\kappa_\infty = 1$ at least at one point. If $\tilde M_\tau^\infty\defi\tilde\gamma_\infty (\tilde I,\tau)$ has a boundary, then $\partial\tilde M^\infty_\tau\subset \Sigma_\infty$, where $\Sigma_\infty $ is a line through $ 0\in\Rzwei$, and $\langle \tilde\nu_\infty, \nu _{ \Sigma_\infty}\rangle = 0$ on $\partial\tilde M_\infty$. 
By reflecting at the line $\Sigma_\infty$ one gets an eternal solution of the curve shortening flow with bounded curvature where the maximal curvature is attained at least at one point. Due to \cite[Theorem~1.3]{Hamilton}, 
  the limit flow must be a translating solution, and the only translating solution in the case of curves is the ``grim reaper'' which is the flow of curves given by $x=-\log\cos y + \tau$ for $y\in (-\frac{\pi}{2},\frac{\pi}{2})$.
  In the situation where the limit flow does have a boundary it must be ``half the grim reaper'' at $ \Sigma_\infty$ because the grim reaper  has only one symmetry axis.
\end{proof}
In \cite{Chou} and \cite{EscherIto} the blowup-rate at the singularity was characterized for the $L^2$-norm of the curvature, and not for the $C^0$-norm as above. This $L^2$-rate can also be proved for the free boundary setting:
\begin{proposition} 
 Let $\gamma:[a,b]\times[0,T_{max})\to\Rzwei$ be a solution of (\ref{1}) with $T_{max}<\infty$ and $|\bar\kappa|\leq c<\infty$. Then there is a constant $C>0$ and a sequence of times $t_k\to T_{max}$ such that
 $$\int|\kappa(\cdot,t_k)|^2d s_{t_k} \geq C(T_{max}-t_k)^{-\frac{1}{2}}$$
\end{proposition}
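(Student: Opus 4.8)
The plan is to track the evolution of $f(t)\defi\int_{\gamma_t}\kappa^2\,ds$ and show it satisfies a differential inequality forcing the stated rate, in the spirit of \cite{Chou, EscherIto} but with the boundary terms coming from the free boundary condition handled separately. Writing $T\defi T_{max}$, I would first compute, using $\pt\kappa=\ps^2\kappa+\kappa^3-\bar\kappa\kappa^2$ and $\pt(ds)=-(\kappa^2-\bar\kappa\kappa)\,ds$,
\begin{align*}
 \frac{d}{dt}\int\kappa^2\,ds = 2\big[\kappa\,\ps\kappa\big]_a^b - 2\int(\ps\kappa)^2\,ds + \int\kappa^4\,ds - \bar\kappa\int\kappa^3\,ds.
\end{align*}
The boundary term is where the free boundary setting enters. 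Inserting $\ps\kappa(a,t)=(\kappa(a,t)-\bar\kappa)\kappa_\Sigma(\gamma(a,t))$ and $\ps\kappa(b,t)=-(\kappa(b,t)-\bar\kappa)\kappa_\Sigma(\gamma(b,t))$ from \cite[Lemma~2.12]{MeinPaper}, the leading contributions $-2\kappa(a,t)^2\kappa_\Sigma-2\kappa(b,t)^2\kappa_\Sigma$ are nonpositive because $\kappa_\Sigma\ge 0$, while the mixed terms are absorbed by Young's inequality, $2\bar\kappa\,\kappa(b,t)\kappa_\Sigma\le\kappa(b,t)^2\kappa_\Sigma+\bar\kappa^2\kappa_\Sigma$ (and likewise at $a$). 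Together with $|\bar\kappa|\le c$ this bounds the entire boundary term by the constant $2c^2\max_\Sigma\kappa_\Sigma$, so no derivative of $\kappa$ survives from the boundary.

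Next I would absorb the interior terms. By the Gagliardo--Nirenberg interpolation inequalities on $\ab$ (as used in \cite{MeinPaper}) one has estimates of the form $\int\kappa^4\,ds\le\varepsilon\int(\ps\kappa)^2\,ds+C_\varepsilon f^3$ and $\int|\kappa|^3\,ds\le\varepsilon\int(\ps\kappa)^2\,ds+C_\varepsilon f^{5/3}$, where the length-dependent lower-order terms are controlled using a lower bound $L(\gamma_t)\ge\delta>0$, which holds in the relevant applications by Proposition~\ref{prop:Lgr0}. Combining these with $|\bar\kappa|\le c$ and choosing $\varepsilon$ small enough that the good negative term $-2\int(\ps\kappa)^2\,ds$ dominates, I obtain a differential inequality
\begin{align*}
 \frac{d}{dt}f(t)\le C_1\,f(t)^3 + C_2 \qquad\text{for all }t\in(0,T).
\end{align*}

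The structural heart of the argument, and the step I expect to be the main obstacle, is to show $\limsup_{t\to T}f(t)=\infty$. This is the $L^2$-continuation criterion: a bound $f\le M$ on $[0,T)$, together with $L\ge\delta$ and $|\bar\kappa|\le c$, would propagate through the evolution equations for $\int(\ps^m\kappa)^2\,ds$ and the bootstrapping of \cite[Proposition~4.7]{MeinPaper} to uniform bounds on all derivatives of $\kappa$, contradicting $\max_{\ab}|\kappa|(\cdot,t)\to\infty$ as $t\to T<\infty$. Hence $f$ is unbounded near $T$.

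Finally I would integrate the differential inequality to extract the rate. Wherever $f\ge 1$ we have $\frac{d}{dt}(f^{-2})=-2f^{-3}\frac{d}{dt}f\ge -2(C_1+C_2)=:-2C_3$. Given a sequence $t_k\to T$ with $f(t_k)\to\infty$, I would work on the maximal interval $(\sigma_k,t_k]$ ending at $t_k$ on which $f\ge1$, obtaining there $f^{-2}(t)\le f^{-2}(t_k)+2C_3(t_k-t)$. Either $f(t_k)\ge(2C_3)^{-1/2}(T-t_k)^{-1/2}$ already, or the point $s_k\defi T-\tfrac{f^{-2}(t_k)}{2C_3}$ satisfies $s_k\in(\sigma_k,t_k)$ for $k$ large (the alternative $s_k\le\sigma_k$ forces $1=f^{-2}(\sigma_k)\le 2f^{-2}(t_k)\to 0$, a contradiction) and then the estimate gives $f^{-2}(s_k)\le 2f^{-2}(t_k)=4C_3(T-s_k)$, i.e.\ $f(s_k)\ge(4C_3)^{-1/2}(T-s_k)^{-1/2}$. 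Since $s_k\to T$, this produces the desired sequence of times and the constant $C=(4C_3)^{-1/2}$, completing the proof.
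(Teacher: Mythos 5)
Your proposal follows the same Chou/Escher--Ito skeleton as the paper: derive a differential inequality of the form $\frac{d}{dt}(\text{curvature energy})\leq C(\text{energy})^3+C$, show the energy is unbounded as $t\to T_{max}$, and integrate backwards to extract the rate $(T_{max}-t_k)^{-1/2}$. The paper outsources the first step to \cite[Corollary~7.4]{MeinPaper} (stated for $E(t)=\int(\kappa-\bar\kappa)^2ds$ under the sole hypothesis $|\bar\kappa|\leq c$), gets unboundedness from the fact that short-time existence depends only on the $C^{1,\alpha}$-norm of the data, and cites \cite[Proposition~5]{EscherIto} for the integration; you unpack all three steps, and your explicit treatment of the boundary terms via $\ps\kappa=\pm(\kappa-\bar\kappa)\kappa_\Sigma$ with $\kappa_\Sigma\geq0$, as well as your ODE integration on the maximal interval where $f\geq1$, are correct.

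The one point to fix is your reliance on a lower length bound $L(\gamma_t)\geq\delta$. It is genuinely needed for your route: interpolating $\int\kappa^4ds$ and $\int|\kappa|^3ds$ against $f=\int\kappa^2ds$ produces lower-order terms carrying negative powers of $L$, so without $L\geq\delta$ your differential inequality is not established. But the proposition does not assume $L_0<d_\Sigma$ or $A(\gamma_0+\sigma_0)\neq0$, so Proposition~\ref{prop:Lgr0} is not available, and ``holds in the relevant applications'' leaves the statement as written unproved. Two repairs: (a) work, as the paper does, with the mean-zero quantity $\kappa-\bar\kappa$, for which the interpolation constants do not degenerate as $L\to0$ (only nonnegative powers of $L\leq L_0$ appear), and note $f\leq E+c^2L_0$ so a rate for $E$ gives one for $f$; or (b) derive the length bound directly from the stated hypotheses: if $L(\gamma_{t_j})\to0$ the endpoints coalesce on $\Sigma$, the Neumann conditions force $\tau(b,t_j)\to-\tau(a,t_j)$, hence $\int_{\gamma_{t_j}}\kappa\,ds$ approaches an odd multiple of $\pi$ and in particular $|\int\kappa\,ds|\geq\pi/2$ eventually, contradicting $|\int\kappa\,ds|=|\bar\kappa|L\leq cL\to0$. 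With either repair your argument is complete; your bootstrapping version of the continuation criterion is heavier than the paper's $C^{1,\alpha}$ short-time-existence argument but equally valid.
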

\begin{proof}
The proof is due to \cite[Proposition~A]{Chou} and \cite[Proposition~5]{EscherIto}. Since $T_{max}<\infty$ we have that $\left\{\int\kappa^2 ds: t\in[0,T_{max})\right\}$ is unbounded. As it was pointed out in \cite[Proof of Proposition~5]{EscherIto}, this comes from the fact that the proof of the short time existence only depends on the $C^{1,\alpha}$-norm of the initial data for all $\alpha\in (0,1)$. For the Neumann boundary condition setting the estimates behind this argument can be found in \cite[Lemma~5.3.2]{MeineDiss}.\\
In order to follow the proof of \cite[Proposition~5]{EscherIto} we only have show that $$\frac{d}{dt} E(t)\leq C\left(E(t) + E(t)^3\right)$$ for $E(t):= \int(\kappa-\bar\kappa)^2 ds$. In \cite[Corollary~7.4]{MeinPaper}, the inequality $$\frac{d}{dt} E(t)\leq C\left(E(t) + E(t)^{\frac{5}{3}} + E(t)^3\right)$$ is proved under the condition $|\bar\kappa|\leq c<\infty$. We have 
\begin{align*}
 E^{\frac{5}{3}} \leq \begin{Bmatrix}
                       E, & \text{ if } 0\leq E\leq 1\\
                       E^3, & \hspace{-0.7cm}\text{ if } 1\leq E
                      \end{Bmatrix}
\leq E^3 + E.
\end{align*}
This was also used in \cite[Proof of Proposition~5]{EscherIto}. 
\end{proof}
The following corollary is immediate.
\begin{corollary}
 Under the conditions of Theorem~\ref{singu} there is a sequence of times $t_k\to T_{max}<\infty$ such that $\int|\kappa(\cdot,t_k)|^2d s_{t_k} \geq C(T_{max}-t_k)^{-\frac{1}{2}}$.
\end{corollary}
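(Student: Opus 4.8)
The plan is to observe that the statement follows by merely verifying the two hypotheses of the preceding proposition—namely $T_{max}<\infty$ and $|\bar\kappa|\leq c<\infty$—under the assumptions of Theorem~\ref{singu}, and then quoting that proposition verbatim. The first hypothesis is handed to us directly: the conclusion of Theorem~\ref{singu} is precisely that $T_{max}<\infty$ in both Case~\ref{case11}) and Case~\ref{case21}).

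The only real work is the uniform bound on $\bar\kappa$, and this too is already packaged in the earlier results. In both cases of Theorem~\ref{singu} one has $A(\gamma_0+\sigma_0)\neq 0$ (strictly positive in Case~\ref{case11}), strictly negative in Case~\ref{case21})), so Proposition~\ref{prop:Lgr0} applies and furnishes a constant $\delta>0$ with $L(\gamma_t)\geq\delta$ for all $t\in[0,T_{max})$. On the other hand, the hypothesis $L_0<d_\Sigma$ lets us invoke Proposition~\ref{est:barkappa}, which gives $(2l-2)\pi<\int_{\gamma_t}\kappa\,ds<2l\pi$ for all $t$, so the total curvature stays bounded in absolute value by a constant $C_l$ depending only on $l$. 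Combining the two estimates,
\begin{align*}
 |\bar\kappa(t)|=\frac{\left|\int_{\gamma_t}\kappa\,d s\right|}{L(\gamma_t)}\leq\frac{C_l}{\delta}=:c<\infty
\end{align*}
uniformly in $t\in[0,T_{max})$, which is exactly the second hypothesis.

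With both hypotheses in hand I would simply apply the preceding proposition, obtaining the constant $C>0$ and the sequence $t_k\to T_{max}$ with $\int|\kappa(\cdot,t_k)|^2\,d s_{t_k}\geq C(T_{max}-t_k)^{-1/2}$. I do not expect any genuine obstacle here, since every ingredient has already been established; the one point to keep an eye on is that the orientation fixed in Theorem~\ref{singu} (so that $\int_{\gamma_0}\kappa\,d s>0$ and $l\in\N$) is harmless for the argument above, because the bound on $|\int_{\gamma_t}\kappa\,d s|$, and hence on $|\bar\kappa|$, is insensitive to the choice of orientation.
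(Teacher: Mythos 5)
Your proposal is correct and is exactly the argument the paper intends: the paper simply labels the corollary ``immediate,'' leaving to the reader precisely the two verifications you carry out, namely $T_{max}<\infty$ from Theorem~\ref{singu} and $|\bar\kappa|\leq c$ from the length lower bound of Proposition~\ref{prop:Lgr0} combined with the total curvature bound of Proposition~\ref{est:barkappa}, before quoting the preceding proposition.
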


\section{Examples} \label{sec3}

It remains to show that there are curves that satisfy the conditions from Theorem~\ref{singu} or Corollary~\ref{typeII}. 

\begin{example}[Example One]
Let us consider a convex curve $\Sigma$ that almost looks like a circle with $d_\Sigma> 2\pi$. Then one can construct an initial curve $\gamma_0:[0,1]\to\R^2$ with $L_0<\frac{4}{3}\pi$, $l=2$ and $A(\gamma_0 + \sigma_0)>\frac{\pi}{2}$. An example is drawn in Figure~\ref{Bild16}. Note that $\sigma_0$ is the connection of $\gamma_0(1)$ and $\gamma_0(0)$ along $\Sigma$ that is visible in the picture. We check the isoperimetric quotient of that initial curve and compare it to the conditions of Theorem~\ref{singu}:
\begin{align*}
 \frac{L_0^2}{A(\gamma_0 + \sigma_0)}< \frac{2}{\pi} \left(\frac{4}{3}\pi\right)^2 = 2 \frac{16}{9} \pi< \frac{9}{2} \pi.
\end{align*}

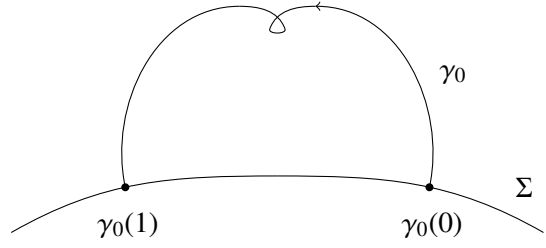
\begin{figure}
	  \begin{center}
	\begin{tikzpicture}
 % \draw (-1,0) to (4,0);
 \draw[out=30,in=180] (-2,-1.1) to (1.5,-0.35);
 \draw[out=0, in=150] (1.5,-0.35) to (5,-1.1);          
          \coordinate[label =right: $\Sigma$] (A) at (4.5,-0.5);
            %\draw[->,out=85,in=0] (3.5,-0.5) to (2,1.8);
             \draw[->,out=78,in=0] (3.5,-0.5) to (2,1.9);
             \coordinate[label =right: ${\gamma_0(0)}$] (C) at (3,-1);
             \fill (3.5,-0.5) circle (1.5pt);
             %\draw[out=180,in=180] (2,1.9) to (1.5,1.6);
                      \draw[out=180,in=70] (2,1.9) to (1.4,1.6);
                       \draw[out=-110,in=-70] (1.4,1.6) to (1.6,1.6);
                       \draw[out=110,in=0]  (1.6,1.6) to (1,1.9);
                        %\draw[out=0,in=0] (1.5,1.6) to (1,1.9);
                          \draw[out=180,in=102] (1,1.9) to (-0.5,-0.5);
                           \coordinate[label =right: ${\gamma_0(1)}$] (D) at (-1,-1); 
                           \fill (-0.5,-0.5) circle (1.5pt);
                          \coordinate[label =right: ${\gamma_0}$] (B) at (3.5,1);
      \end{tikzpicture}
	  \end{center}
	   \caption{An initial curve, where the flow develops a singularity in finite time, see Example~One.} \label{Bild16} 
\end{figure}

Thus, this curve develops a type II singularity in finite time. This is somehow not surprising as it was shown in \cite[Proposition~9]{EscherIto} that a curve looking like the described $\gamma_0$ but closed on the ``lower part'' (a so-called ``lima\c{c}on'') develops a singularity in finite time under the area preserving curve shortening flow \emph{without boundary}. And the ``lima\c{c}on'' is the classical example where the curve shortening flow (without boundary) develops a type II singularity \cite{Angenent}. These type II singularities are usually expected when there is a self-intersection. \\
But there are examples satisfying the conditions from Case~\ref{case11}) in Theorem~\ref{singu} that seem to behave differently, see Example~Two. 
\end{example}

\begin{example}[Example Two]

We construct $\gamma_0:[0,1]\to\R^2$ as shown in Figure~\ref{Bild1}. Again, $\sigma_0$ is the connection of $\gamma_0(1)$ to $\gamma_0(0)$ along $\Sigma$. As in the first example we have that $l=2$. We construct $\gamma_0$ such that $L_0<d_\Sigma$, $L_0< \frac{4}{3}\pi$ and $A(\gamma_0 + \sigma_0)>\frac{\pi}{2}$. We conclude again
\begin{align*}
 \frac{L_0^2}{A(\gamma_0 + \sigma_0)  }< \frac{2}{\pi} \left(\frac{4}{3}\pi\right)^2 = 2 \frac{16}{9} \pi< \frac{9}{2} \pi.
\end{align*}
For this particular $\gamma_0$ we conjecture that the curves stay embedded under the flow (\ref{1}) and that the type II singularity forms at  the boundary. 

\begin{figure} 
	  \begin{center}
	  \scalebox{0.7}{
	\begin{tikzpicture}
 % \draw (-1,0) to (4,0);
 %\draw[out=30,in=180] (-2,-1.1) to (1.5,-0.35);
 \draw[out=20,in=180] (-3.5,-1.3) to (1.5,-0.35);
% \draw[out=-150,in= 40] (-2,-1.1) to (-3.5,-2.1);
 \draw[out=0, in=160] (1.5,-0.35) to (6.5,-1.3);    
  %\draw[out=-30, in=140] (5,-1.1) to (6.5,-2.1);   
          \coordinate[label =right: $\Sigma$] (A) at (1.5,-0.8);
          \draw[->,out=97, in=0] (-1.2,-0.6) to (-1.6,-0.3); 
            \coordinate[label =below: ${\gamma_0(0)}$] (C) at (-1.2,-0.8);
             \fill (-1.2,-0.6) circle (1.5pt);
           \draw[out=180, in=90] (-1.6,-0.3) to (-2.2,-1);  
                    \draw[out=270, in=180]  (-2.2,-1) to (1.5,-5);  
                     \draw[out=0, in=270]   (1.5,-5) to (5.2,-1); 
                      \draw[out=90, in=0]   (5.2,-1) to (4.6,-0.3); 
                       \draw[out=180, in=83]    (4.6,-0.3) to (4.2,-0.6); 
                           \coordinate[label =below: ${\gamma_0(1)}$] (D) at (4.2,-0.8);
             \fill (4.2,-0.6) circle (1.5pt);
%             \draw[->,out=85,in=0] (3.5,-0.5) to (2,2);
%                        \draw[out=180,in=180] (2,2) to (1.5,1.5);
%                         \draw[out=0,in=0] (1.5,1.5) to (1,2);
%                           \draw[out=180,in=95] (1,2) to (-0.5,-0.5);
                          \coordinate[label =right: ${\gamma_0}$] (B) at (3.5,-3.3); 
                          
      \end{tikzpicture}
      }
	  \end{center}
	   \caption{Another initial curve, where the flow develops a singularity in finite time, see Example~Two.} \label{Bild1} 
\end{figure}

\end{example}

\begin{example}[Example Three]
The conditions of Theorem~\ref{singu}, Case~\ref{case21}) are satisfied by a curve $\gamma_0:[0,1]\to\R^2$ as shown in Figure~\ref{Bild2}. We choose $G_\Sigma$ big enough such that $L_0< d_\Sigma$. We have that $\kappa_0>0$ and $l=2$. We have constructed $\gamma_0$ in such a way that $A(\gamma_0 +\sigma_0)<0$. By Theorem~\ref{singu} we get a singularity in finite time that is of type~II.
\begin{figure} 
	  \begin{center}
	 % \scalebox{0.7}{
	\begin{tikzpicture}
	\draw[out=10, in=180] (-3,-0.3) to (0,0);
	\draw[out=0, in=170]  (0,0) to (3,-0.3);
	\draw[->,out=82, in=70]  (1,-0.03) to (0,-0.9);
	  \coordinate[label =right: ${\gamma_0(0)}$] (C) at (1.1,0.2);
             \fill (1,-0.03) circle (1.2pt);
	\draw[out=-110, in=180]  (0,-0.9) to (-0.0,-1.5);
	\draw[out=0, in=-40]  (-0.0,-1.5) to (-0.8,-0.5);
	\draw[out=140, in=100]   (-0.8,-0.5) to (-1.6,-0.08);
	 \coordinate[label =left: ${\gamma_0(1)}$] (C) at (-1.6,0.2);
	  \fill (-1.6,-0.08) circle (1.2pt);
	\coordinate[label =right: $\Sigma$ ] (B) at (2.5,-0.8);
	  \coordinate[label =right: ${\gamma_0}$] (A) at (0.4,-0.7);
      \end{tikzpicture}
      %}
 \end{center}
  \caption{Another initial curve, where the flow develops a singularity in finite time, see Example~Three.} \label{Bild2} 
  \end{figure}
\end{example}

\begin{example}[Example Four]
 As Theorem~\ref{singu} gives the existence of singularities also for non-convex curves, we provide such an example, see Figure~\ref{Bild3}. The initial curve $\gamma_0:[0,1]\to\R^2$ satisfies $\int_{\gamma_0}\kappa ds \in (-2\pi,0)$ but $A(\gamma_0 + \sigma_0)>0$. After changing the orientation Case~\ref{case21}), Theorem~\ref{singu} applies, and the flow develops a singularity in finite time.
 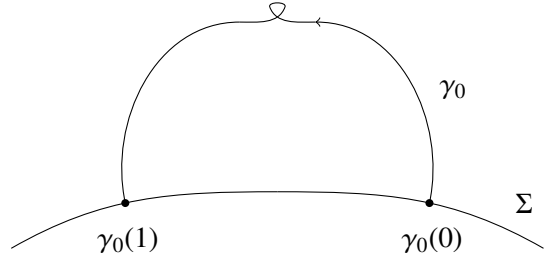
\begin{figure}
	  \begin{center}
	\begin{tikzpicture}
 % \draw (-1,0) to (4,0);
 \draw[out=30,in=180] (-2,-1.1) to (1.5,-0.35);
 \draw[out=0, in=150] (1.5,-0.35) to (5,-1.1);          
          \coordinate[label =right: $\Sigma$] (A) at (4.5,-0.5);
            %\draw[->,out=85,in=0] (3.5,-0.5) to (2,1.8);
             \draw[->,out=78,in=0] (3.5,-0.5) to (2,1.9);
             \coordinate[label =right: ${\gamma_0(0)}$] (C) at (3,-1);
             \fill (3.5,-0.5) circle (1.5pt);
             %\draw[out=180,in=180] (2,1.9) to (1.5,1.6);
                      \draw[out=180,in=-70] (2,1.9) to (1.4,2.1);
                       \draw[out=110,in=70] (1.4,2.1) to (1.6,2.1);
                       \draw[out=-110,in=0]  (1.6,2.1) to (1,1.9);
                        %\draw[out=0,in=0] (1.5,1.6) to (1,1.9);
                          \draw[out=180,in=102] (1,1.9) to (-0.5,-0.5);
                           \coordinate[label =right: ${\gamma_0(1)}$] (D) at (-1,-1); 
                           \fill (-0.5,-0.5) circle (1.5pt);
                          \coordinate[label =right: ${\gamma_0}$] (B) at (3.5,1);
      \end{tikzpicture}
	  \end{center}
	   \caption{A non-convex initial curve, where the flow develops a singularity in finite time, see Example~Four.} \label{Bild3} 
\end{figure}
 
\end{example}

\section{The area preserving curve shortening flow at a straight line} \label{sec4}
In this section, we consider the area preserving curve shortening flow (APCSF) at a straight line. 
We prove that there are initial curves that develop a singularity in finite time. 
The situation is somehow easier than in the previous section. The strategy is to reflect the curves at the line and to use the results from \cite{EscherIto} for the closed case. At first we have to specify some notation for the case that $\Sigma$ is a straight line.

\begin{definition}
Consider the map $f:
s\mapsto (-s,0) \in\R^2$, $s\in(-\infty,\infty)$. The map $f$ parametrizes the line $\Sigma\defi \{(x,y)\in\R^2:x\in\R,y=0\}$.\\
 A smooth, regular curve $\gamma_0:[a,b]\to\Rzwei$ is called \emph{initial curve} if it satisfies the conditions
\begin{align*}
 \gamma_0(a),\gamma_0(b)&\in\Sigma\\
\tau_0(a) &= e_2, \\
\tau_0(b) &= -e_2,
\end{align*}
  where $\tau_0=\ps \gamma_0$ is the tangent of $\gamma_0$ and $e_2= (0,1)\in\R^2$ is the second standard vector in $\R^2$.
\end{definition}

    \begin{definition}
 Let $f:[a,b]\to\Rzwei$ be a piecewise smooth, regular and closed curve. The number
\begin{align*}
\ind(f)\defi n(\partial_pf,0) \in \mathbb Z
\end{align*}
is called the \emph{index} (or \emph{turning number}) of $f$. Here, $n(\partial_p f,0)$ denotes the winding number of the curve $\partial_p f:[a,b]\to\Rzwei$ with respect to $0\in\Rzwei$.
\end{definition}

 \begin{theorem} \label{indexformel}
 Let $f$ be a piecewise smooth, regular and closed curve, defined on intervals $[a_j,b_j]$, $j=1,\dots,k$, and with exterior angles $\alpha_j$, $j=1,\dots,k$. Then
\begin{align*}
 \ind(f)=\frac{1}{2\pi}\sum_{j=0}^k\int_{a_j}^{b_j}\kappa_f\de s_f + \frac{1}{2\pi} \sum_{j=0}^k\alpha_j  \ \ \in\mathbb Z.
\end{align*}
\end{theorem}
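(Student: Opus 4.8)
The plan is to prove the identity by lifting the unit tangent to a real-valued angle function and tracking its total increase as one traverses the closed curve once, since by definition this total increase equals $2\pi$ times the winding number $\ind(f)$. First I would reduce from the velocity $\pp f$ to the unit tangent $\tau\defi\pp f/|\pp f|$: as $f$ is regular, $\pp f$ takes values in $\R^2\setminus\{0\}$, and the straight-line homotopy to $\tau$ stays in $\R^2\setminus\{0\}$ (it only rescales $\pp f$ by a positive factor), so the winding number around $0$ is unchanged and $\ind(f)=n(\tau,0)$. Since $f$ is merely piecewise smooth, $\tau$ has jump discontinuities at the finitely many corners; as is standard for the turning–tangent theorem, the winding number of the tangent indicatrix is understood by closing up each jump with the short arc on $\Sph$ that rotates the incoming tangent direction into the outgoing one through the signed exterior angle $\alpha_j\in(-\pi,\pi)$.

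On each smooth piece I would lift $\tau|_{[a_j,b_j]}$ through the universal covering $\R\to\Sph$, $\theta\mapsto(\cos\theta,\sin\theta)$, to a continuous angle function $\theta_j:[a_j,b_j]\to\R$, so that $\tau=(\cos\theta_j,\sin\theta_j)$ and $\nu=J\tau=(-\sin\theta_j,\cos\theta_j)$. Differentiating gives $\ps\tau=(\ps\theta_j)\nu$, hence $\kappa_f=\langle\ps\tau,\nu\rangle=\ps\theta_j$, and therefore
$$\int_{a_j}^{b_j}\kappa_f\,\de s_f=\theta_j(b_j)-\theta_j(a_j),$$
i.e.\ the curvature integral over each arc records exactly the net turning of the tangent angle along that arc. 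At the corner joining piece $j$ to the next, the inserted corner arc contributes precisely the signed exterior angle $\alpha_j$ to the angle function, so the lifts can be chosen continuously across the corner to satisfy $\theta_{j+1}(a_{j+1})=\theta_j(b_j)+\alpha_j$.

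Concatenating these contributions, the total increase of the now globally continuous angle function around the closed curve is
$$\sum_{j}\bigl(\theta_j(b_j)-\theta_j(a_j)\bigr)+\sum_j\alpha_j=\sum_j\int_{a_j}^{b_j}\kappa_f\,\de s_f+\sum_j\alpha_j.$$
Because $f$ is closed, the tangent indicatrix with the corner arcs inserted is a closed loop in $\Sph$, so its total angular increase is an integer multiple of $2\pi$, and that integer is by definition the winding number $n(\tau,0)=\ind(f)$. Dividing by $2\pi$ yields the claimed identity and, at the same time, shows that the right-hand side lies in $\Z$.

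I expect the only genuine subtlety to be the sign bookkeeping at the corners: one must fix the orientation convention so that the exterior angle $\alpha_j$, the signed angle in $(-\pi,\pi)$ from the incoming to the outgoing tangent at the corner, is measured consistently with the curvature convention $\kappa_f=\ps\theta$, ensuring that the corner jumps and the smooth increments add with the same sign. Once this is pinned down, the remainder is the elementary fact that the winding number of a loop in $\Sph$ equals its total angular increase divided by $2\pi$, together with the integer-valuedness of that winding number.
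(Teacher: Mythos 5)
Your proof is correct: it is the standard lifting argument (homotope $\partial_p f$ to the unit tangent, lift the tangent indicatrix to an angle function $\theta$ with $\kappa_f=\ps\theta$ on each smooth arc, add the exterior-angle jumps at the corners, and identify the total increase with $2\pi$ times the winding number). The paper gives no proof of its own — it simply cites Klingenberg, Theorem 2.1.6 — and your argument is precisely the one found there; the only convention to pin down beyond what you wrote is the choice of sign for a cusp ($\alpha_j=\pm\pi$), which the paper does invoke in the degenerate case $\gamma_0(a)=\gamma_0(b)$ of Lemma~\ref{mgeq1}.
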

\begin{proof}
 See \cite[Theorem 2.1.6]{Klingenberg}. 
\end{proof}

\begin{lemma} \label{mgeq1}
  Let $\gamma_0:\ab\to\R^2$ be an initial curve. Reflect the curve $\gamma_0$ at the line $\Sigma$ into the lower half space of $\R^2$. Then the resulting closed curve $\delta_0$ is a $C^2$ curve with $\ind(\delta_0)=:m \in\Z$. The number $m$ is odd. 
\end{lemma}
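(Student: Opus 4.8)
The plan is to analyze the curve $\delta_0$ obtained by gluing $\gamma_0$ to its reflection $\overline{\gamma_0}$ across the line $\Sigma$ and to apply the index formula of Theorem~\ref{indexformel}. First I would set up notation: let $R:\R^2\to\R^2$ denote the reflection $(x,y)\mapsto(x,-y)$, and let $\overline{\gamma_0}\defi R\circ\gamma_0$ traversed with the reverse orientation, so that $\delta_0$ is the concatenation of $\gamma_0$ (from $\gamma_0(a)$ to $\gamma_0(b)$) followed by $\overline{\gamma_0}$ (from $\gamma_0(b)$ back to $\gamma_0(a)$). Both pieces are smooth arcs, so $\delta_0$ is piecewise smooth with exactly two potential corner points, at the images of $a$ and $b$ on $\Sigma$; I would record that these are the only places where non-smoothness could occur.

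Next I would compute the two exterior angles $\alpha_a,\alpha_b$ at the junction points. This is the heart of the argument: the boundary conditions $\tau_0(a)=e_2$ and $\tau_0(b)=-e_2$ are exactly what force the glued curve to be $C^2$ rather than merely continuous. At the endpoint $\gamma_0(a)$, the incoming tangent of $\gamma_0$ is $e_2$ (pointing into the upper half plane), while the outgoing tangent of the reflected arc is $R(e_2)=-e_2$ reversed, and a short check shows the tangent directions match up (so the exterior angle is $0$ there, and likewise at $\gamma_0(b)$). In fact, because reflection across $\Sigma$ sends the perpendicular tangent vectors $\pm e_2$ to their negatives and the curve meets $\Sigma$ orthogonally, the first derivative is continuous and, by the reflection symmetry of the second-order jet of a curve hitting a line perpendicularly, the curvature also extends continuously; this gives the $C^2$ regularity claimed in the statement. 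Thus $\alpha_a=\alpha_b=0$ and $\delta_0$ has no genuine corners.

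With the corner terms vanishing, Theorem~\ref{indexformel} reduces to
\begin{align*}
 m=\ind(\delta_0)=\frac{1}{2\pi}\int_{\delta_0}\kappa_{\delta_0}\,\de s.
\end{align*}
Here I would use the reflection symmetry once more: the total curvature of the reflected arc $\overline{\gamma_0}$ equals that of $\gamma_0$, since reflection reverses the sign of curvature but reversing the orientation of the arc reverses it back (or, more carefully, one checks the turning of the tangent along each half is the same). Hence $\int_{\delta_0}\kappa_{\delta_0}\,\de s=2\int_{\gamma_0}\kappa_0\,\de s$, and the integral over $\gamma_0$ together with the two straight segments along $\Sigma$ — which contribute nothing — shows the turning of $\delta_0$'s tangent is exactly twice a common value. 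The parity statement then follows from tracking the tangent direction: along $\gamma_0$ the tangent turns from $e_2$ to $-e_2$, an odd half-integer multiple of $2\pi$ of net rotation, i.e.\ $\int_{\gamma_0}\kappa_0\,\de s\equiv\pi\pmod{2\pi}$, so $m=\tfrac{1}{\pi}\int_{\gamma_0}\kappa_0\,\de s$ is an odd integer.

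The step I expect to be the main obstacle is the careful bookkeeping of orientations and of the exact relation between the tangent turning of $\gamma_0$ and that of its reflection: one must be precise about whether reflecting-and-reversing preserves or flips the sign of the turning so that the two halves add rather than cancel, and one must verify the $C^2$ matching at the junctions rigorously (continuity of $\partial_s\gamma$ and of $\kappa$ under reflection at a perpendicular crossing). Once the junction analysis is pinned down, the parity of $m$ is a direct consequence of the boundary condition $\tau_0(a)=-\tau_0(b)$ together with the index formula, so the geometric content is entirely concentrated in the behavior at the two points where $\gamma_0$ meets $\Sigma$.
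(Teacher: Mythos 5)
Your argument is correct, but it reaches the parity statement by a genuinely different route than the paper. You double the curve first and apply the index formula to the smooth closed curve $\delta_0$ with no corner terms, reducing everything to the congruence $\int_{\gamma_0}\kappa\,\de s\equiv\pi\pmod{2\pi}$, which you read off from the fact that the tangent of $\gamma_0$ turns from $e_2$ at $a$ to $-e_2$ at $b$ (a continuous lift of the tangent angle for an open arc). The paper instead closes $\gamma_0$ up before reflecting: it appends the straight segment $\sigma_0\subset\Sigma$ from $\gamma_0(b)$ to $\gamma_0(a)$ and applies Theorem~\ref{indexformel} to the piecewise smooth closed curve $\gamma_0+\sigma_0$, whose two exterior angles are $+\frac{\pi}{2}$ or $-\frac{\pi}{2}$ depending on whether $f^{-1}(\gamma_0(a))\leq f^{-1}(\gamma_0(b))$ or not (and $+\pi$ in the degenerate case $\gamma_0(a)=\gamma_0(b)$); this yields $\int_{\gamma_0}\kappa\,\de s=2\pi l\mp\pi$ and hence $\ind(\delta_0)=2l\mp1$ after doubling. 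Both proofs establish exactly the same congruence; yours avoids the case distinction on the relative position of the endpoints and the degenerate coincident-endpoint case, at the cost of invoking the turning-angle lift for an open arc rather than the Umlaufsatz with corners. Two remarks: first, your verification that reflecting \emph{and} reversing orientation preserves the sign of $\kappa$, so that the two halves of $\int_{\delta_0}\kappa\,\de s$ add rather than cancel, is precisely the point the paper passes over silently and is worth keeping explicit. Second, a small slip: the closed curve $\delta_0$ contains no straight segments along $\Sigma$ at all (reflection fixes the two endpoints, so the reflected arc returns directly to $\gamma_0(a)$), so your remark about ``two straight segments which contribute nothing'' is spurious, though harmless since their contribution would indeed be zero.
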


\begin{proof}
 As the curves meet $\Sigma$ perpendicularly and because of reflection at a line the reflected curves are $C^2$. We treat two cases:\\ 
 Case 1: $f^{-1}(\gamma_0(a))\leq f^{-1}(\gamma_0(b))$. \\
 Then consider $\gamma_0 + \sigma_0$ where $\sigma_0$ is the line segment from $\gamma_0(b)$ to $\gamma_0(a)$. The exterior angles at the points where $\tau_0$ is not continuous are $+\frac{\pi}{2}$ (or $+\pi$ if $\gamma_0(a)=\gamma_0(b)$). Thus, we have $l\defi\ind(\gamma_0 +\sigma_0)= \frac{1}{2\pi}\left(\int_{\gamma_0}\kappa d s + \pi\right) \in\Z$ or equivalently $\int_{\gamma_0}\kappa ds = 2\pi l - \pi$. After reflecting we get $\ind(\delta_0)= \frac{2\int_{\gamma_0}\kappa d s}{2\pi} =  2l - 1$. \\
 Case 2: $f^{-1}(\gamma_0(a)) > f^{-1}(\gamma_0(b))$.\\
 We denote by $\sigma_0$ the line segment from $\gamma_0(b)$ to $\gamma_0(a)$. Note that this is oriented in the opposite direction compared to $f$. 
 Now the exterior angles of $\gamma_0 + \sigma_0$ are $-\frac{\pi}{2}$. This implies $\int\kappa ds = 2\pi l +\pi$ for $l\in\Z$. By reflection we conclude $ \ind(\delta_0)= \frac{2\int_{\gamma_0}\kappa d s}{2\pi} =  2l +1.$
\end{proof}

\begin{remark}
 The APCSF preserves the reflection symmetry with respect to the $x$-axis. It hence does not matter whether we start at the straight line the APCSF with Neumann free boundary conditions and then reflect at $\Sigma$ or if we reflect at first and then consider the APCSF for closed curves. Thus, we recover the APCSF with Neumann free boundary conditions from the flow of the closed curves. 
\end{remark}

\begin{proposition}\label{propsingu}
  Let $\gamma_0:\ab\to\R^2$ be an initial curve. Reflect $\gamma_0$ at $\Sigma$ and denote the closed curve by $\delta_0$. Choose the orientation of $\delta_0$ such that $\ind(\delta_0)=:m\geq 0$. Lemma~\ref{mgeq1} shows that $m$ is odd.\\
  Then the area preserving curve shortening flow with Neumann free boundary conditions at the line $\Sigma$ develops a  singularity in finite time if one of the following conditions is satisfied:
  \begin{enumerate}
   \item Either $A(\delta_0)<0$.
   \item Or $m\geq 3$ and $L(\delta_0)^2 < 4\pi m A(\delta_0)$.
  \end{enumerate}

\end{proposition}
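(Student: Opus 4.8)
The plan is to reduce the free boundary problem at the straight line $\Sigma$ to the closed-curve situation studied by Escher and Ito in \cite{EscherIto}, and then transfer their finite-time singularity criterion back to the original flow. The key tool is the reflection symmetry: since the APCSF with Neumann free boundary conditions at a straight line preserves the reflection symmetry across $\Sigma$ (as noted in the Remark preceding the statement), the reflected closed curve $\delta_0$ evolves precisely by the APCSF for closed immersed curves. Thus a singularity of the closed flow at finite time corresponds to a singularity of the free boundary flow at the same time.

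\textbf{Main steps.} First I would make the symmetry correspondence precise: let $\gamma:[a,b]\times[0,T_{\max})\to\R^2$ solve (\ref{1}) with $\Sigma$ the $x$-axis, and let $\delta:\Sph\times[0,T_{\max})\to\R^2$ be the curve obtained by reflecting $\gamma_t$ across $\Sigma$ at each time. Because $\gamma_t$ meets $\Sigma$ orthogonally at both endpoints and the reflection is an isometry, $\delta_t$ is a $C^2$ closed immersed curve (Lemma~\ref{mgeq1}), its index $m = \ind(\delta_0)$ is odd and time-independent, and the evolution equation of $\delta$ is exactly $\frac{d}{dt}\delta = \left(\kappa - \frac{2\pi m}{L(\delta)}\right)\nu$, the APCSF for an immersed closed curve of index $m$. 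Here one must check that the nonlocal term matches: the average curvature of the reflected curve, $\frac{\int_{\delta_t}\kappa\, ds}{L(\delta_t)}$, coincides with $\bar\kappa(t)$ for $\gamma_t$, since both the total curvature and the length simply double under reflection, and $\int_{\delta_t}\kappa\,ds = 2\pi m$ by the index formula (Theorem~\ref{indexformel}). Second, I would invoke the Escher--Ito result directly: they proved that an immersed closed curve evolving by the APCSF with index $m \geq 1$ and enclosed area $A_0 < 0$, or with $m \geq 2$ and $L_0^2 < 4\pi m A_0$, develops a singularity in finite time. Since $m$ is odd and nonnegative here, the relevant cases are $A(\delta_0) < 0$ (case \textit{i}) and $m \geq 3$ with $L(\delta_0)^2 < 4\pi m A(\delta_0)$ (case \textit{ii}); the case $m = 1$ under the second hypothesis is excluded because $m \geq 2$ is needed for the Escher--Ito area criterion, which is why the statement requires $m \geq 3$. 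In either case the closed flow $\delta$ satisfies $T_{\max} < \infty$, and by the symmetry correspondence the free boundary flow $\gamma$ also has $T_{\max} < \infty$.

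\textbf{Type II and conclusion.} To show the singularity is of type II rather than type I, I would argue exactly as in the proof of Corollary~\ref{typeII}. The bound $|\bar\kappa| \leq c < \infty$ holds because $\bar\kappa(t) = \frac{2\pi m}{L(\delta_t)}$ and the length is bounded below along the closed flow (the closed curve cannot shrink to a point while preserving a fixed nonzero signed area, analogous to Proposition~\ref{prop:Lgr0}). A parabolic rescaling at points of maximal curvature then scales the nonlocal term away, yielding an ancient solution of the plain curve shortening flow; the blowup-rate estimate of Lemma~\ref{blowuprate} combined with Halld\'orsson's classification \cite{Halldorsson} of self-shrinkers rules out a type I limit, since a type I rescaling would force the limit to be one of the shrinking self-similar solutions while the length lower bound and properness exclude all of them except the line, which in turn is excluded by the normalization $\kappa_\infty = 1$ at the rescaling point.

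\textbf{Main obstacle.} The step I expect to require the most care is verifying that the reflected flow $\delta$ is genuinely a smooth (or at least $C^2$) solution of the immersed closed APCSF across the reflection seam, so that \cite{EscherIto} applies without modification. One must confirm that the perpendicularity condition guarantees $C^2$ matching of $\gamma_t$ and its reflection at the endpoints for every $t$, not merely at $t = 0$, and that the nonlocal speed of the closed curve is consistent with the free boundary speed through the seam. This is where the orthogonality boundary condition (\ref{1}) does the essential work, and once it is established, the remainder is a clean citation of the closed-curve theory plus the type II argument already developed in Section~\ref{sec2}.
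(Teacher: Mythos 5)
Your proposal follows the paper's own route exactly: reflect across the line, use the preserved reflection symmetry to identify the free boundary flow with the closed immersed APCSF of odd index $m$ (Lemma~\ref{mgeq1} and the preceding Remark), and cite the Escher--Ito finite-time singularity criterion, with the parity of $m$ explaining the hypothesis $m\geq 3$ in case \textit{ii}). The type~II discussion you add is handled in the paper as a separate corollary, but your argument for it (length lower bound from the isoperimetric inequality, bounded $\bar\kappa$, then the blow-up analysis of Corollary~\ref{typeII}) is the same as the paper's.
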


\begin{proof}
 We use Lemma~\ref{mgeq1} to get that $m$ is odd, so $m\geq 1$ is always satisfied. Use \cite[Proposition~9]{EscherIto} for the flow of the reflected curve to get that that $T_{max}<\infty$. 
\end{proof}

\begin{corollary}
The finite time singularity appearing in Proposition~\ref{propsingu} is of type~II.
\end{corollary}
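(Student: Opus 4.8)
The plan is to reduce the type~II statement at the straight line to the type~II result already established for the free boundary setting outside a convex domain, exploiting the reflection symmetry. First I would recall from the remark preceding Proposition~\ref{propsingu} that the APCSF with Neumann free boundary conditions at $\Sigma$ corresponds, via reflection, to the APCSF for the closed curve $\delta_0$, and conversely the symmetric closed flow restricts to the free boundary flow. Hence singularity formation and blowup rates of the symmetric closed flow agree with those of the free boundary flow at the line. Proposition~\ref{propsingu} already gives $T_{\max}<\infty$, so the first defining property of a type~II singularity, namely $\max_{[a,b]}|\kappa|(\cdot,t)\to\infty$ as $t\to T_{\max}$, follows from the blowup criterion recorded in the existence remark in Section~\ref{sec2}.

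The substance is to rule out a type~I rate, i.e.\ to show $\max_{[a,b]}\bigl(|\kappa|^2(p,t)(T_{\max}-t)\bigr)$ is unbounded. Here I would follow verbatim the structure of the proof of Corollary~\ref{typeII}: the line $\Sigma$ plays the role of the support curve but with $\kappa_\Sigma\equiv 0$, which only \emph{simplifies} the boundary terms. The key input is an $L^\infty$ bound $|\bar\kappa|\le c_2$ independent of $T_{\max}$; since $\int_{\gamma_t}\kappa\,ds$ is controlled by the index $m$ of $\delta_0$ (which is constant in time) and the length is non-increasing and bounded below near any interior singularity, such a bound is available, exactly as in the closed case treated in \cite{EscherIto}. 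With $|\bar\kappa|\le c_2$ in hand, Lemma~\ref{blowuprate} applies unchanged and yields the lower blowup bound $\kappa_{max}^2(t)\ge\frac{1}{4(T_{\max}-t)}$; note Lemma~\ref{blowuprate} is stated for solutions of~(\ref{1}) and its proof uses the boundary derivative identity $\ps\kappa(a,t)=(\kappa(a,t)-\bar\kappa(t))\kappa_\Sigma(\gamma(a,t))$, which degenerates to $\ps\kappa(a,t)=0$ on the line, making the sign argument even more direct.

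Next I would run the contradiction argument. Assume the singularity is of type~I, perform the parabolic rescaling $\tilde\gamma_j(p,\tau)\defi Q_j(\gamma(p,\tfrac{\tau}{Q_j^2}+T)-x_0)$ at a blowup point with $Q_j=\max|\kappa|(\cdot,t_j)\to\infty$, and obtain smooth subconvergence to an ancient solution of the curve shortening flow, the constraint term $\bar\kappa_j$ scaling away because of the uniform bound $|\bar\kappa|\le c_2$. If the blowup occurs at the boundary, the limit meets a straight line perpendicularly, so reflecting produces a complete self-similarly shrinking solution; by Halldorsson's classification \cite{Halldorsson} together with the unbounded length and properness, all candidates except the line $\R\times\{0\}$ are excluded. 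Finally the line is excluded exactly as in Corollary~\ref{typeII}: the normalization $\kappa_{\tilde\gamma_j}(p_j,\tau_j)=1$ combined with the type~I upper bound and the lower bound of Lemma~\ref{blowuprate} forces $\tau_j\in[-c,-\tfrac14]$, giving a limit time $\tau$ with $\kappa_\infty(0,\tau)=1$, contradicting $\kappa_\infty\equiv0$ on the line.

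The main obstacle I anticipate is not the rescaling or the classification, which transfer mechanically, but confirming the uniform bound $|\bar\kappa|\le c_2$ and a positive lower bound on length along the straight-line flow without the machinery of Propositions~\ref{est:barkappa} and~\ref{prop:Lgr0}, whose hypotheses ($L_0<d_\Sigma$, controlled short piece) were tailored to a compact convex $\Sigma$ and do not literally apply to an unbounded line. The cleanest route is to argue these bounds for the reflected closed curve $\delta_0$, where $\int_{\delta_0}\kappa\,ds=2\pi m$ is a fixed index and the length is non-increasing, and then transport the estimates back through the reflection symmetry; one must also verify that a type~I rate for the free boundary flow is equivalent to a type~I rate for the symmetric closed flow, which is immediate since $|\kappa|$ and the maximal time agree under reflection. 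Once these bounds are secured, the type~II conclusion follows by the same scheme as Corollary~\ref{typeII}.
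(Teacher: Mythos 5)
Your proposal is correct and follows essentially the same route as the paper: reflect to the closed curve $\delta_t$, use the constancy of the index to get $\int_{\gamma_t}\kappa\,ds=\pi m$ and hence $|\bar\kappa|\leq c_2$, obtain the lower length bound from the closed-curve side, and then invoke the blowup argument of Corollary~\ref{typeII}. The only detail you leave slightly implicit is that the lower bound on $L(\gamma_t)$ comes from the isoperimetric inequality $L(\delta_t)^2\geq 4\pi|A(\delta_t)|=4\pi|A(\delta_0)|>0$, which is available because the hypotheses of Proposition~\ref{propsingu} force $A(\delta_0)\neq 0$.
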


\begin{proof}
 Denote by $\delta_t$, $t\in[0,T_{max})$, the closed curves and with $\gamma_t$, $t\in[0,T_{max})$, the curves with boundary. By the isoperimetric inequality for $\delta_t$ we get that $L(\delta_t)^2\geq 4\pi |A(\delta_t)| = 4\pi |A(\delta_0)|$. This implies $L(\gamma_t)^2\geq \pi |A(\delta_0)|>0$. Thus the length is bounded from below uniformly in $t$. We have that $2\int_{\gamma_0}\kappa d s = \int_{\delta_0}\kappa d s=2\pi m\in\Z$. Continuity yields $\int_{\gamma_t}\kappa d s= \pi m$ for all $t\in[0,T_{max})$. Thus $|\bar\kappa_{\gamma_t}(t)|\leq c_2<\infty$ uniformly in $t$.
 A blowup argument as in \cite[Theorem~4.16]{MeinPaper} or as in the proof of Corollary~\ref{typeII} implies that the singularity is of type II.
\end{proof}

\bibliographystyle{plain}
\bibliography{Lit}

\end{document}